\author{Paul \textsc{Poncet}}
\address{CMAP, \'{E}cole Polytechnique, Route de Saclay, 91128 Palaiseau Cedex, France \\
and INRIA, Saclay--\^{I}le-de-France}
\email{poncet@cmap.polytechnique.fr}
\DeclareMathOperator*{\id}{id}
\DeclareMathOperator*{\dom}{dom}
\newtheorem{theorem}{Theorem}[section]
\newtheorem{corollary}[theorem]{Corollary}
\newtheorem{proposition}[theorem]{Proposition}
\newtheorem{lemma}[theorem]{Lemma}
\theoremstyle{definition}
\newtheorem{definition}[theorem]{Definition}
\newtheorem{example}[theorem]{Example}
\newtheorem{examples}[theorem]{Examples}
\newtheorem{problem}[theorem]{Problem}
\newenvironment{acknowledgements}[1][]{\par\vspace{0.5cm}\noindent\textbf{Acknowledgements#1.} }{\par}
\begin{document}

\title{Domain theory and mirror properties\\in inverse semigroups}

\date{\today}

\subjclass[2010]{20M18, 
                 06A12, 
                 06B35, 
                 06F05} 

\keywords{inverse semigroups, domains, continuous posets, algebraic posets, meet-continuous posets, multiplicative way-above relation}

\begin{abstract}
Inverse semigroups are a class of semigroups whose structure induces a compatible partial order. This partial order is examined so as to establish  \textit{mirror properties} between an inverse semigroup and the semilattice of its idempotent elements, such as continuity in the sense of domain theory. 
\end{abstract}

\maketitle

\section{Introduction}

The branch of order theory called \textit{domain theory} was initiated in the early 1970's with the pioneering work of Dana S.\ Scott on a model of untyped lambda-calculus \cite{Scott72}. 
Of special interest among the class of domains are \textit{continuous semilattices}. This is partly due to the ``fundamental theorem of compact semilattices'', which identifies continuous complete semilattices and compact topological semilattices with small semilattices (see Jimmie D.\ Lawson \cite{Lawson69, Lawson73} for an early form of this result, Hofmann and Stralka \cite{Hofmann76} for the original statement, Lea \cite{Lea76} for an alternative proof; see also Gierz et al.\ \cite[Theorem~VI-3.4]{Gierz03}). 

A generalization of semilattices is the concept of \textit{inverse semigroup}; it dates back to the 1950's with the works of Wagner \cite{Vagner52}, Liber \cite{Liber53}, and Preston \cite{Preston54a}. 
An inverse semigroup is naturally endowed with a compatible partial order (called \textit{intrinsic}), and many authors have investigated its structure from the point of view of order theory, including Wagner \cite{Vagner52}, Mitsch \cite{Mitsch78}, Mark V.\ Lawson \cite{Lawson98b}, Resende \cite{Resende06}. 
See also Mitsch \cite{Mitsch86, Mitsch94} for the extension of this partial order to \textit{every} semigroup (continuing the works of Hartwig \cite{Hartwig80} and Nambooripad \cite{Nambooripad80} on regular semigroups). 

Inverse semigroups also form a nice generalization of groups. While many tools of group theory have been successfully exported to inverse semigroup theory, the  contribution of semilattice theory is barely visible. 
Especially, no attempt has been made to apply the concepts (of approximation and continuity in particular) of domain theory to the framework of inverse semigroups. The purpose of this work is to fill this gap. More precisely, we aim at proving what we call \textit{mirror properties}, i.e.\ properties that hold for an inverse semigroup $S$ if and only if they hold for its semilattice $\mathit{\Sigma}(S)$ of idempotent elements. Our main theorem asserts that continuity and algebraicity in the sense of domain theory are mirror properties:

\begin{theorem}
Assume that $S$ is a mirror semigroup. Then $S$ is continuous (resp.\ algebraic) if and only if $\mathit{\Sigma}(S)$ is continuous (resp.\ algebraic). 
\end{theorem}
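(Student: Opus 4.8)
The plan is to pivot everything through two elementary algebraic facts about the natural partial order and then to let the hypothesis of being a mirror semigroup transport directed suprema between $S$ and $\mathit{\Sigma}(S)$. Recall that $a\le b$ iff $a=aa^{-1}b$, that each $aa^{-1}$ is idempotent, and that idempotents commute. First I would record the map $\theta\colon S\to\mathit{\Sigma}(S)$, $\theta(a)=aa^{-1}$: it is monotone (if $a=aa^{-1}b$ then $\theta(a)=\theta(a)\,\theta(b)\le\theta(b)$) and fixes every idempotent, so it is an order-retraction of $S$ onto $\mathit{\Sigma}(S)$ whose section is the inclusion. Second, and this is the structural backbone, I would prove the local order-isomorphism $\mathord{\downarrow}a\;\cong\;\mathord{\downarrow}\theta(a)$ realized by $b\mapsto bb^{-1}$ and $e\mapsto ea$: one checks $(ea)(ea)^{-1}=e$ for $e\le aa^{-1}$ and $bb^{-1}a=b$ for $b\le a$, so the two maps are mutually inverse, and both are monotone since $ef=e$ forces $ea=e(fa)\le fa$. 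Thus every principal ideal of $S$ is, as a poset, a principal ideal of the idempotent semilattice.

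A further elementary fact streamlines the forward implication: if $b\le e$ with $e$ idempotent then $b=bb^{-1}e=bb^{-1}$ is itself idempotent, so $\mathit{\Sigma}(S)$ is a lower set in $S$; combined with $\theta(\sup D)=\sup\theta(D)$ it follows that the supremum of a directed family of idempotents is idempotent, i.e.\ $\mathit{\Sigma}(S)$ is a sub-dcpo. I would use the mirror hypothesis precisely to secure this Scott-continuity of $\theta$ (equivalently, the compatibility of products and inverses with directed suprema) and the directed completeness of $S$ from that of $\mathit{\Sigma}(S)$. Granting it, the forward implication is almost formal: $\mathit{\Sigma}(S)$ is a Scott-continuous retract of $S$ with section the inclusion, and a Scott-continuous retract of a continuous poset is continuous; for algebraicity I would note that the compact elements of $S$ lying below an idempotent are themselves idempotent and remain compact in the sub-dcpo $\mathit{\Sigma}(S)$, so that algebraicity descends verbatim.

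For the converse I would build the way-below relation of $S$ out of that of $\mathit{\Sigma}(S)$. Fix $a\in S$ and assume $\mathit{\Sigma}(S)$ continuous. Then $\{e\in\mathit{\Sigma}(S):e\ll\theta(a)\}$ is directed with supremum $\theta(a)$; transporting it through the local isomorphism yields the directed set $\{ea:e\ll\theta(a)\}\subseteq\mathord{\downarrow}a$ with supremum $(\sup e)\,a=\theta(a)\,a=a$. It therefore suffices to prove the \emph{multiplicative} statement that $e\ll\theta(a)$ in $\mathit{\Sigma}(S)$ implies $ea\ll a$ in $S$ (with a parallel statement for compactness in the algebraic case), for then $\{b:b\ll a\}$ is directed with supremum $a$ and $S$ is continuous. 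This multiplicative transfer of the way-below relation is what I expect to be the main obstacle, and where the phrase \emph{multiplicative way-above relation} earns its place: the relation $\ll$ on $S$ is global, so to verify $ea\ll a$ I must control \emph{every} directed $D$ with $a\le\sup D$, not merely those inside $\mathord{\downarrow}a$. The idea is to push such a $D$ down by $\theta$ to a directed set with $\theta(a)\le\sup\theta(D)$, apply $e\ll\theta(a)$ to land below some $\theta(d)=dd^{-1}$, and then lift back through the local isomorphism and left multiplication---using exactly the mirror compatibility---to conclude $ea\le d$. Showing that domination after $\theta$ together with $b\le a\le\sup D$ forces genuine domination in $S$ is the delicate point on which the whole converse, and hence the theorem, turns.
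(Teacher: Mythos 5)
Your skeleton is close to the paper's own: the paper also reduces the whole theorem to a transfer of way-below through the idempotent map (its Lemma~4.4: for a separately Scott-continuous mirror semigroup, $s \ll t$ iff $s \leqslant t$ and $\sigma(s) \prec\!\!\!\prec \sigma(t)$), and it approximates $s$ by the set $s\mathit{\Delta}$ with $\mathit{\Delta}$ approximating $s^*s$ --- your $\{ea : e \prec\!\!\!\prec \theta(a)\}$ is the same set up to taking inverses. But your proposal has a genuine gap exactly where you say the theorem ``turns'': the transfer statement ($e \prec\!\!\!\prec \theta(a)$ in $\mathit{\Sigma}$ implies $ea \ll a$ in $S$) is announced as the main obstacle and then only sketched, not proved. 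A reduction of the theorem to an unproved lemma is not a proof. The good news is that your sketch does go through, by a short computation you should have supplied: let $D$ be directed in $S$ with $a \leqslant u := \bigvee D$. By Rinow's lemma (the paper's Lemma~3.5, valid in \emph{every} inverse semigroup --- no mirror hypothesis needed), $\theta(D)$ is directed in $\mathit{\Sigma}$ with supremum $\theta(u) \geqslant \theta(a)$ there, so $e \leqslant dd^*$ for some $d \in D$; then
$$
ea \;=\; (e\,aa^*)\,u \;=\; eu \;=\; (e\,dd^*)\,u \;=\; ed \;\leqslant\; d,
$$
using $a = aa^*u$ (i.e.\ $a \leqslant u$), $e\,aa^* = e$, $e\,dd^* = e$, $dd^*u = d$, and $ed \leqslant d$ because $e$ is idempotent. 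The same transfer applied to $ea$ in place of $a$ (note $\theta(ea)=e$) handles compactness, hence the algebraic case you only gesture at.

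There is a second, subtler flaw: you justify $\bigvee(Ea) = (\bigvee E)a = a$, where $E = \{e : e \prec\!\!\!\prec \theta(a)\}$, by ``transporting through the local isomorphism.'' That is not valid: an order isomorphism $\downarrow a \cong \downarrow \theta(a)$ only identifies suprema \emph{computed inside the principal ideals}, and a supremum inside $\downarrow a$ need not be a supremum in $S$. The paper's Example~3.2 ($S = [0,1] \cup \{\omega\}$) is exactly a witness: $[0,1)$ has supremum $1$ inside $\downarrow 1$, but no supremum in $S$ because of the incomparable upper bound $\omega$. What you actually need is (i) the mirror hypothesis, which is precisely the statement that $\bigvee E$, computed in $\mathit{\Sigma}$, is still the supremum \emph{in $S$}, and then (ii) Domanov's distributivity lemma (the paper's Lemma~3.6, applied after taking inverses, since $ee^* = e \leqslant aa^*$ for $e \in E$) to get $\bigvee(Ea) = (\bigvee E)a$. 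Relatedly, you misplace the hypotheses: Scott-continuity of $\theta$ is \emph{not} what ``mirror'' provides (it is automatic, by Rinow); mirror is exactly Scott-continuity of the inclusion $\mathit{\Sigma} \hookrightarrow S$, which is what your retract argument for the forward direction and step (i) above really use; and your hope that mirror also transports directed-completeness from $\mathit{\Sigma}$ to $S$ is both unjustified and unnecessary, since the theorem concerns continuity of posets, not dcpos. Once these repairs are made, your route is actually leaner than the paper's, which reaches the same goal through meet-continuity and separate Scott-continuity (its Proposition~4.1 and Lemma~5.1) rather than through Domanov's lemma directly.
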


The hypothesis of being a \textit{mirror} semigroup is a simple technical condition on suprema of directed subsets. 
We apply this result to a series of examples such as the symmetric pseudogroup of a topological space, the bicyclic monoid, or the semigroup of characters of an inverse semigroup. In passing we redefine the notion of character to encompass both the classical one used in group theory and the one that plays this role in semilattice theory (where ``characters'' take their values in $[0,1]$ rather than in the complex disc). 

This study of inverse semigroups from the point of view of domain theory is motivated by the recent work of Castella \cite{Castella10}. He realized that inverse semigroups (that he rediscovered under the name of \textit{quasigroups}) are a \textit{good} (i.e.\ not a too strong, unlike general semigroup theory) generalization of groups and semilattices. He applied this concept to the study of \textit{inverse semirings}, \textit{inverse semifields} and polynomials with coefficients in these semifields. He thus sketched a global theory gathering both classical algebra and max-plus algebra. 
This echos with other very recent work by Lescot \cite{Lescot09}, Connes and Consani \cite{Connes12}, and Connes \cite{Connes10}. These authors looked for the ``one-element field'' $\mathbb{F}_1$ which is an undefined concept introduced in geometry by Tits \cite{Tits57}. This quest led them to max-plus algebra and tropical geometry, and the semilattices they considered, once endowed with an additional binary relation, are seen as semifields of characteristic one, hence as a particular case of the whole class of semifields of non-zero characteristic.  

Little by little, a new field of investigation takes shape, not only connected with algebraic questionings, but also with analytic ones. It would aim at unifying classical mathematics (where addition predominates) and idempotent mathematics (based on the maximum operation). But such a program will not be effective without domain theory. For even if domains are of limited importance in classical mathematics (and inverse semigroup theory tells us that the intrinsic order reduces to equality when groups are at stake), it becomes a crucial tool in idempotent mathematics, as Jimmie D.\ Lawson explained in \cite{Lawson04b}. We also observed this in \cite[Chapters~II, IV, V]{Poncet11}: proving idempotent analogues of the Krein--Milman theorem or the Choquet integral representation theorem necessitates such a theory. 

The paper is organized as follows. 
Section~\ref{sec:is} gives some basics of inverse semigroup theory and recalls or builds some key examples. 
In Section~\ref{sec:compl} we focus on completeness properties of inverse semigroups, recalling some known results due to Rinow \cite{Rinow63} and Domanov \cite{Domanov71} 
and bringing up some new ones. 
In Section~\ref{sec:meetc} 
we prove mirror properties related to separate Scott-continuity / meet-continuity. 
Our main theorem on continuous (resp.\ algebraic) inverse semigroups is the purpose of Section~\ref{sec:cont}.

\section{Preliminaries on inverse semigroups}\label{sec:is}

A \textit{semigroup} $(S, \cdot)$ is a set $S$ equipped with an associative binary relation. An element $\epsilon$ of $S$ is \textit{idempotent} if $\epsilon \epsilon = \epsilon$. The set of idempotent elements of $S$ is denoted by $\mathit{\Sigma}(S)$. 
The semigroup $S$ is \textit{inverse} if the idempotents of $S$ commute and if, for all $s \in S$, there is some $t \in S$, called an \textit{inverse} of $s$, such that $s t s = s$ and $t s t = t$. An \textit{inverse monoid} is an inverse semigroup with an identity element. 
It is well-known that a semigroup is inverse if and only if every element $s$ has a unique inverse, denoted by $s^*$. Our main reference for inverse semigroup theory is the monograph by Mark V.\ Lawson \cite{Lawson98b}. The reader may also consult Petrich's book \cite{Petrich84}. 

It is worth recalling some basic rules for inverse semigroups. 

\begin{proposition}	
Let $S$ be an inverse semigroup, and let $s, t \in S$. Then  
\begin{enumerate}
	\item $s s^*$ and $s^* s$ are idempotent. 
	\item $(s^*)^* = s$ and $(s t)^* = t^* s^*$. 
	\item $s^* = s$ if $s$ is idempotent. 
\end{enumerate}
\end{proposition}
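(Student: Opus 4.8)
The plan is to lean throughout on the characterization recalled just above the statement, namely that in an inverse semigroup every element has a \emph{unique} inverse $s^*$, so that each claim reduces to exhibiting an element satisfying the two inverse relations $x y x = x$, $y x y = y$ and then invoking uniqueness. I would prove the three items in order, since (2) and (3) both make use of (1).

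For part (1) I would argue directly from the defining relations $s s^* s = s$ and $s^* s s^* = s^*$. The computation $(s s^*)(s s^*) = (s s^* s)\, s^* = s s^*$ shows that $s s^*$ is idempotent, and the symmetric computation $(s^* s)(s^* s) = s^*\,(s s^* s) = s^* s$ handles $s^* s$. No appeal to commutativity of idempotents is needed at this stage.

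For part (2), the identity $(s^*)^* = s$ is immediate: the pair of relations defining ``$t$ is an inverse of $s$'' is symmetric in $s$ and $t$, so $s$ is itself an inverse of $s^*$, and uniqueness forces $(s^*)^* = s$. The substantive claim is $(st)^* = t^* s^*$, which I would establish by verifying that $t^* s^*$ is an inverse of $st$. This is precisely where the hypothesis that idempotents commute becomes essential: rewriting $(st)(t^* s^*)(st) = s\,(t t^*)(s^* s)\, t$ and swapping the two idempotents $t t^*$ and $s^* s$ (idempotent by part (1)) yields $s\,(s^* s)(t t^*)\, t = (s s^* s)(t t^* t) = st$; the companion relation $(t^* s^*)(st)(t^* s^*) = t^* s^*$ follows from the same swap. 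Uniqueness of inverses then gives $(st)^* = t^* s^*$.

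Finally, part (3) is one more uniqueness argument: if $s$ is idempotent then $s s s = (s s)\, s = s s = s$, so $s$ is its own inverse and hence $s^* = s$. I expect the only genuine obstacle to lie in the product formula of part (2), and specifically in the bookkeeping that lets the commutativity of idempotents act; every other step is a one-line verification feeding into the uniqueness of inverses.
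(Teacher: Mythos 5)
Your proof is correct and complete. Note that the paper itself offers no proof of this proposition --- it is stated as a recollection of well-known basic rules (implicitly deferring to Lawson's monograph) --- so there is nothing to compare against; your argument is the standard one, and each step checks out: part (1) follows from associativity and the defining relations alone, part (2) correctly isolates the one place where commutativity of idempotents is genuinely needed (swapping $t t^*$ and $s^* s$ inside $(st)(t^*s^*)(st)$ and inside $(t^*s^*)(st)(t^*s^*)$), and parts (2) and (3) both close by invoking uniqueness of inverses, which the paper has already recalled as an equivalent characterization of inverse semigroups.
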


A partial order $\leqslant$ can be defined on the idempotents of a semigroup as follows: $\epsilon \leqslant \phi$ if $\epsilon \phi = \phi \epsilon = \epsilon$. 
For an inverse semigroup, this partial order can be naturally extended to the whole underlying set, if we define $s \leqslant t$ by $s = t \epsilon$ for some idempotent $\epsilon$ \cite[Proposition~1-7]{Lawson98b}. We shall refer to $\leqslant$ as the \textit{intrinsic (partial) order} of the inverse semigroup $S$. In this case, $(\mathit{\Sigma}(S), \leqslant)$ is a \textit{semilattice} \cite[Proposition~1-8]{Lawson98b}, i.e.\ a partially ordered set in which every nonempty finite subset has an infimum (or, equivalently, a commutative idempotent semigroup). 
Also, the intrinsic order is compatible with the structure of semigroup, in the sense that $s \leqslant t$ and $s' \leqslant t'$ imply $s s' \leqslant t t'$ \cite[Proposition~1-7]{Lawson98b}. 
We recall some equivalent characterizations of $\leqslant$. 

\begin{lemma}\label{lem:leq}
Let $S$ be an inverse semigroup, and let $s, t \in S$. Then $(s \leqslant t) \Leftrightarrow (s^*\leqslant t^*) \Leftrightarrow (s = t s^* s) \Leftrightarrow (s = \epsilon t \mbox{ for some idempotent } \epsilon) \Leftrightarrow (s = s s^* t)$. 
\end{lemma}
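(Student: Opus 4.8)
The plan is to prove all five conditions equivalent by running a single cycle of implications through them, using only the defining rules of an inverse semigroup together with the identities from the preceding Proposition: idempotents commute, $ss^{*}$ and $s^{*}s$ are idempotent, $s s^{*} s = s$ (equivalently $t t^{*} t = t$), $(uv)^{*}=v^{*}u^{*}$, and $\epsilon^{*}=\epsilon$ for every idempotent $\epsilon$. Two of the links are immediate from the definition of $\leqslant$: the equality $s = t s^{*}s$ exhibits $s$ as $t$ times the idempotent $s^{*}s$, hence gives $s \leqslant t$; and likewise $s = s s^{*} t$ trivially yields $s = \epsilon t$ with the idempotent $\epsilon = s s^{*}$. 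So the real content lies in moving idempotent factors from one side of an element to the other and in exchanging $s$ for $s^{*}$.

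The one genuinely computational step, which I expect to be the main obstacle, is the implication $(s \leqslant t) \Rightarrow (s = s s^{*} t)$. Starting from $s = t\epsilon$ with $\epsilon$ idempotent, I would first pass to inverses to get $s^{*}=\epsilon t^{*}$, whence $s s^{*} = t \epsilon t^{*}$, and then compute $s s^{*} t = t \epsilon t^{*} t$. The point is to commute the two idempotents $\epsilon$ and $t^{*}t$ and collapse the resulting expression: $t \epsilon t^{*} t = t (t^{*} t) \epsilon = (t t^{*} t)\epsilon = t \epsilon = s$. Every subsequent link will be arranged to reduce to this computation or to a one-line application of the involution $s\mapsto s^{*}$.

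With that implication in hand, the cycle closes cheaply. The passage $(s = s s^{*} t) \Rightarrow (s = \epsilon t)$ is the trivial link noted above. Next, $(s = \epsilon t) \Rightarrow (s^{*}\leqslant t^{*})$ follows by passing to inverses, since $s^{*} = t^{*}\epsilon^{*}=t^{*}\epsilon$ displays $s^{*}$ as $t^{*}$ times an idempotent. For $(s^{*}\leqslant t^{*}) \Rightarrow (s = t s^{*} s)$ I would simply re-use the main computation applied to the pair $(s^{*},t^{*})$: it gives $s^{*} = s^{*} s t^{*}$ (after simplifying $(s^{*})^{*}=s$), and passing to inverses once more turns this into $s = t s^{*} s$. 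Finally $(s = t s^{*} s) \Rightarrow (s\leqslant t)$ is again the definitional link. Traversing the cycle
\[
 (s\leqslant t)\ \Rightarrow\ (s = s s^{*}t)\ \Rightarrow\ (s=\epsilon t)\ \Rightarrow\ (s^{*}\leqslant t^{*})\ \Rightarrow\ (s = t s^{*}s)\ \Rightarrow\ (s\leqslant t)
\]
then shows that the five statements are equivalent.
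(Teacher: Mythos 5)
Your proof is correct. There is, however, no in-paper argument to compare it against: the paper disposes of this lemma by citing Lawson's monograph (Lemma~1-6 there), so what you have produced is a self-contained verification that the paper itself omits. Your single five-step cycle
\[
(s\leqslant t)\Rightarrow(s=ss^{*}t)\Rightarrow(s=\epsilon t)\Rightarrow(s^{*}\leqslant t^{*})\Rightarrow(s=ts^{*}s)\Rightarrow(s\leqslant t)
\]
checks out link by link, and every identity you invoke --- commuting idempotents, $ss^{*}$ and $s^{*}s$ idempotent, $(uv)^{*}=v^{*}u^{*}$, $\epsilon^{*}=\epsilon$ for idempotents, $tt^{*}t=t$ --- is available from the Proposition stated just before the lemma, so the argument needs nothing beyond the paper's own preliminaries. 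The central computation $ss^{*}t=t\epsilon t^{*}t=t(t^{*}t)\epsilon=t\epsilon=s$ is exactly the standard trick (and is essentially how the textbook proof runs); what your write-up buys over pairwise equivalences is economy, since the involution $s\mapsto s^{*}$ lets you recycle that one computation for the link $(s^{*}\leqslant t^{*})\Rightarrow(s=ts^{*}s)$ instead of redoing a symmetric calculation.
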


\begin{proof}
See e.g.\ \cite[Lemma~1-6]{Lawson98b}. 
\end{proof}

The following examples are extracted from the literature, with the exception of the last two of them. 

\begin{example}[Groups]\label{ex:groups}
An inverse semigroup is a group if and only if its intrinsic order coincides with equality. The only idempotent element is the identity of the group. 
\end{example}

\begin{example}[Symmetric pseudogroup]\label{ex:sym}
Let $X$ be a nonempty set. The \textit{symmetric pseudogroup} $\mathrsfs{I}(X)$ on $X$ is the set made up of all the partial bijections on $X$, i.e.\ the bijections $f : U \rightarrow V$ where $U$ and $V$ are subsets of $X$ (in this situation we write $\dom(f)$ for $U$). This is an inverse semigroup when endowed with the composition defined by $f_1 \cdot f : x \mapsto f_1(f(x)) : f^{-1}(V \cap U_1) \rightarrow f_1(V \cap U_1)$, where $f : U \rightarrow V$ and $f_1 : U_1 \rightarrow V_1$. The involution is the inversion, given by $f^* = f^{-1} : V \rightarrow U$. Idempotent elements of $\mathrsfs{I}(X)$ are of the form $\id_U$ for some subset $U$, and the partial order on $\mathrsfs{I}(X)$ is given by 
$
f \leqslant g 
$ 
if and only if  $f = g|_U$ for some subset $U \subset \dom(g)$. 
\end{example}

\begin{example}[Cosets of a group, see e.g.\ McAlister \cite{McAlister80}]\label{ex:cosets}
Let $G$ be a group. A \textit{coset} of $G$ is a subset of the form $H g$, where $H$ is a subgroup of $G$ and $g \in G$. 
Every nonempty intersection of cosets is a coset, so, for all cosets $C, C'$, we can consider the smallest coset $C \otimes C'$ containing $C C'$. The product $\otimes$ makes the collection $\mathrsfs{C}(G)$ of cosets of $G$ into an inverse monoid called the \textit{coset monoid} of $G$. 
The intrinsic order of $\mathrsfs{C}(G)$ coincides with reverse inclusion, and the idempotents of $\mathrsfs{C}(G)$ are exactly the subgroups of $G$. 
\end{example}

\begin{example}[Bicyclic monoid, see e.g.\ Mark V.\ Lawson \protect{\cite[Sec.\ 3.4]{Lawson98b}}]\label{ex:bic}
Let $P$ be the positive cone of a lattice-ordered group. 
On $P \times P$, one can define the binary relation $\oplus$ by $(a, b) \oplus (c, d) = (a - b + b \vee c, d - c + b \vee c)$. This makes $P \times P$ into an inverse monoid with identity $(0,0)$, called the \textit{bicyclic monoid}, such that $(a, b)^* = (b, a)$. An element $(a, b)$ is idempotent if and only if $a = b$, and the intrinsic order satisfies $(a, b) \leqslant (c, d)$ if and only if $c = d + a - b$ and $d \leqslant b$. In particular, for idempotents, $(a, a) \leqslant (b, b)$ if and only if $b \leqslant a$. 
\end{example}

\begin{example}[Rotation semigroup]\label{ex:rot}
On the unit disc $B^2$ of the complex numbers $\mathbb C$, 
let us consider the binary relation defined by 
$$
z \otimes z' = \min(r, r') \exp(\mathbf{i} (\theta + \theta') ),  
$$
if one write $z = r e^{\mathbf{i} \theta}$ and $z' = r' e^{\mathbf{i} \theta'}$ with $r, r' \in [0, 1]$. 
Then $(B^2, \otimes)$ is an inverse monoid, $z^*$ coincides with the conjugate $\bar{z}$ of $z$, and $z$ is idempotent if and only if $z \in [0, 1]$. Moreover, the intrinsic order satisfies $z \leqslant z'$ if and only if $r = 0$ or ($r \leqslant r'$ and $\theta \equiv \theta' [2 \pi]$). 
\end{example}

\begin{example}[Characters]\label{ex:cha}
Let $S$ be a commutative inverse monoid. We define a \textit{character} on $S$ as a morphism $\chi : S \rightarrow B^2$ 
of inverse monoids, where $B^2$ is equipped with the inverse monoid structure of the previous example. If $1$ denotes the identity of $S$, this means that $\chi(1) = 1$ and $\chi(s t) = \chi(s) \otimes \chi(t)$ for all $s, t \in S$ (the fact that $\chi(s^*) = \chi(s)^*$ then automatically holds). This definition differs from the one used e.g.\ by Warne and Williams \cite{Warne61} and Fulp \cite{Fulp71}, for these authors equipped $\mathbb C$ or $B^2$ with the usual multiplication. 
The set $S^{\wedge}$ of characters on $S$ has itself a natural structure of inverse monoid. 
A character $\chi$ is idempotent in $S^{\wedge}$ if and only if it is $[0,1]$-valued.  
Moreover, if $S$ is actually a group (resp.\ a semilattice), then every non-zero $\chi$ takes its values into the unit circle (resp.\ into $[0,1]$). 
\end{example}

Henceforth, $S$ denotes an inverse semigroup and we let $\mathit{\Sigma} = \mathit{\Sigma}(S)$. 
The purpose of the next section is to investigate completeness properties of inverse semigroups with respect to their intrinsic order.

\section{Completeness properties of inverse semigroups}\label{sec:compl}

Of particular usefulness in the framework of domain theory is the property of being directed-complete. 
The term \textit{poset} is an abbreviation for ``partially ordered set''. 
A subset $D$ of a poset is \textit{directed} if $D$ is nonempty and, for each pair $s, t \in D$, there exists some $r \in D$ such that $s \leqslant r$ and $t \leqslant r$. The poset is \textit{directed-complete} if every directed subset has a supremum. It is \textit{conditionally directed-complete} if every principal ideal $\{ t : t \leqslant s \}$ is a directed-complete poset, i.e.\ if every directed subset bounded above has a supremum. 

In the framework of inverse semigroups, a subset of the semilattice of idempotents $\mathit{\Sigma}$ may have a supremum in 
$\mathit{\Sigma}$ but no supremum in the whole inverse semigroup. However, this property will appear desirable for establishing mirror properties, hence a definition is needed. 

\begin{definition}
The inverse semigroup $S$ is a \textit{mirror semigroup} if every directed subset of $\mathit{\Sigma}$ with a supremum in $\mathit{\Sigma}$ also has a supremum in $S$. 
\end{definition}

In this case, both suprema coincide and belong to $\mathit{\Sigma}$. Indeed, if $\mathit{\Delta}$ is a directed subset of $\mathit{\Sigma}$ with supremum $\delta$ in $\mathit{\Sigma}$ and supremum $d$ in $S$, then $d \leqslant \delta$, i.e.\ $d = \delta d^* d$. As a product of idempotent elements, $d$ is idempotent itself, so that $d = \delta$. 

\begin{example}\label{ex:cex}
As an example of inverse semigroup that is not mirror, we can consider the set $S = [0, 1] \cup \{ \omega \}$ equipped with the binary relation $\otimes$ defined by $s \otimes t = \min(s, t)$ if $s, t \in [0, 1]$, $\omega \otimes s = s \otimes \omega = s$ if $s \in [0, 1)$, $\omega \otimes 1 = 1 \otimes \omega = \omega$, $\omega \otimes \omega = 1$. Then $(S, \otimes)$ is an inverse semigroup whose subset of idempotents is $\mathit{\Sigma} = [0, 1]$. Moreover, the directed subset $[0, 1)$ admits $1$ as supremum in $\mathit{\Sigma}$, but has two incomparable upper bounds in $S$, namely $1$ and $\omega$, hence has no supremum in $S$. 
\end{example}

Fortunately, mirror semigroups are rather numerous, as the following proposition shows. 

\begin{proposition}
The inverse semigroup $S$ is a mirror semigroup in any of the following cases:
\begin{enumerate}
	\item\label{perky1} $S$ projects onto $\mathit{\Sigma}$, i.e.\ there is some order-preserving map $j : S \rightarrow \mathit{\Sigma}$ such that $j\circ j = j$ and $j \leqslant \id_S$. 
	\item\label{perky2} $S$ is \textit{reduced}, i.e.\ $s \geqslant \epsilon$ and $\epsilon \in \mathit{\Sigma}$ imply $s \in \mathit{\Sigma}$. 
	\item\label{perky3} $S$ is (conditionally) directed-complete, 
	\item\label{perky5} $(S,\cdot)$ is a semilattice, i.e.\ $S$ coincides with $\mathit{\Sigma}$, 
	\item\label{perky6} $(S, \cdot)$ is a group. 
	\item\label{perky7} $S$ is finite. 
\end{enumerate}
\end{proposition}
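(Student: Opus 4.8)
The plan is to treat case~\ref{perky1} as the master case and then show that each of the remaining conditions either reduces to it or yields the conclusion directly. Throughout, I would fix a directed subset $\mathit{\Delta} \subseteq \mathit{\Sigma}$ admitting a supremum $\delta$ in $\mathit{\Sigma}$. Since the intrinsic order on $\mathit{\Sigma}$ is the restriction of the one on $S$, the element $\delta$ is already an upper bound of $\mathit{\Delta}$ in $S$; and by the remark preceding the proposition, any supremum of $\mathit{\Delta}$ in $S$ must coincide with $\delta$. Hence in every case the whole task reduces to producing a least upper bound of $\mathit{\Delta}$ in $S$, and it suffices to show that $\delta$ itself is one.

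For case~\ref{perky1}, the first step is to observe that surjectivity of $j$ onto $\mathit{\Sigma}$ together with $j \circ j = j$ forces $j$ to fix every idempotent: given $\epsilon \in \mathit{\Sigma}$, write $\epsilon = j(s)$ and compute $j(\epsilon) = j(j(s)) = j(s) = \epsilon$. Now let $s$ be any upper bound of $\mathit{\Delta}$ in $S$. Applying the order-preserving map $j$ to $\epsilon \leqslant s$ for each $\epsilon \in \mathit{\Delta}$ and using $j(\epsilon) = \epsilon$ gives $\epsilon \leqslant j(s)$; thus $j(s) \in \mathit{\Sigma}$ is an upper bound of $\mathit{\Delta}$ in $\mathit{\Sigma}$, whence $\delta \leqslant j(s)$. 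Since $j \leqslant \id_S$ yields $j(s) \leqslant s$, the sandwich $\delta \leqslant j(s) \leqslant s$ closes, and $\delta = \sup_S \mathit{\Delta}$.

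Cases~\ref{perky2} and~\ref{perky3} I expect to be short and self-contained. If $S$ is reduced, then every upper bound $s$ of $\mathit{\Delta}$ in $S$ lies above some $\epsilon \in \mathit{\Delta} \subseteq \mathit{\Sigma}$, hence $s \in \mathit{\Sigma}$; so all upper bounds of $\mathit{\Delta}$ in $S$ already lie in $\mathit{\Sigma}$, and $\delta$, being least among them there, is least among them in $S$. If $S$ is conditionally directed-complete, it is enough to note that $\mathit{\Delta}$ is directed and bounded above in $S$ (by $\delta$), so its supremum in $S$ exists by hypothesis; the directed-complete case is a fortiori.

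Finally, cases~\ref{perky5}, \ref{perky6}, and~\ref{perky7} are degenerate. When $S = \mathit{\Sigma}$ there is nothing to prove; when $S$ is a group, $\mathit{\Sigma}$ reduces to the singleton $\{ e \}$ consisting of the identity, so the only directed subset is $\{ e \}$, with supremum $e$ in $S$; and when $S$ is finite, any directed subset contains a greatest element (iterating the directedness condition finitely many times terminates), which is the supremum in both $\mathit{\Sigma}$ and $S$, so $S$ is in fact directed-complete and case~\ref{perky7} follows from case~\ref{perky3}. The only point that requires genuine care is the linchpin of case~\ref{perky1}: it is the ``onto'' hypothesis that makes $j$ restrict to the identity on $\mathit{\Sigma}$, and without surjectivity one could have $j(\epsilon) < \epsilon$, after which the sandwich argument collapses.
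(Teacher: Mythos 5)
Your proposal is correct and follows essentially the same route as the paper: the same sandwich argument $\delta \leqslant j(u) \leqslant u$ for case~(1) (including the observation that surjectivity of $j$ forces $j(\epsilon)=\epsilon$ on idempotents, which the paper uses implicitly), the same ``all upper bounds are idempotent'' argument for case~(2), and the same dispatch of the remaining cases as immediate. The only cosmetic difference is that you derive case~(7) by noting a finite poset is directed-complete and invoking case~(3), whereas the paper argues directly that $\delta \in \mathit{\Delta}$; the content is identical.
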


\begin{proof}
Cases (\ref{perky3}), (\ref{perky5}) and (\ref{perky6}) are straightforward. 
For (\ref{perky1}), (\ref{perky2}), and (\ref{perky7}), let $\mathit{\Delta}$ be a directed subset of $\mathit{\Sigma}$. 
Assume that $\mathit{\Delta}$ has a supremum $\delta$ in $\mathit{\Sigma}$, and let $u \in S$ be an upper bound of $\mathit{\Delta}$. We need to show that $\delta \leqslant u$. 

(\ref{perky1}) Assume the existence of $j : S \rightarrow \mathit{\Sigma}$. For all $\alpha \in \mathit{\Delta}$, $\alpha \leqslant u$, hence $j(\alpha) = \alpha \leqslant j(u)$. Since $j(u)$ is idempotent, this is an upper bound of $\mathit{\Delta}$ in $\mathit{\Sigma}$, and we deduce that $\delta \leqslant j(u) \leqslant u$. 

(\ref{perky2}) If $S$ is reduced, then $u \in \mathit{\Sigma}$ since $\mathit{\Delta}$ is supposed nonempty, so that $\delta \leqslant u$. 

(\ref{perky7}) If $S$ is finite, then the directed subset $\mathit{\Delta}$ is finite, so $\delta \in \mathit{\Delta}$. This implies $\delta \leqslant u$. 
\end{proof}

\begin{examples}
The symmetric pseudogroup is directed-complete; the coset monoid of a group is conditionally directed-complete; the bicyclic monoid, the rotation semigroup, and the character monoid are reduced. Thus, all the examples of inverse semigroups that we introduced in the previous section are mirror semigroups. 
\end{examples}

The reader may ask why the definition of a mirror semigroup does not require that every directed subset of $\mathit{\Sigma}$ with a supremum in $S$ also has a supremum in $\mathit{\Sigma}$. This property turns out to hold in every inverse semigroup. The following lemma gives a stronger statement. If $A \subset S$, we write $\bigvee A$ for the supremum of $A$ in $S$, whenever it exists. Also, we denote by $\sigma$ the \textit{source map} $S \rightarrow \mathit{\Sigma}$ defined by $s \mapsto \sigma(s) = s^* s$. 

\begin{lemma}\cite{Rinow63}\label{lem:dd}
Let $A$ be a nonempty subset of $S$. 
If $\bigvee A$ exists, then $\bigvee \sigma(A)$ exists and $\bigvee \sigma(A) = \sigma(\bigvee A)$. 
\end{lemma}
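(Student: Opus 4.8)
The plan is to show that if $a := \bigvee A$, then $\sigma(a) = a^{*} a$ is precisely the supremum of $\sigma(A)$; this simultaneously yields the existence of $\bigvee \sigma(A)$ and the claimed equality. The first, easy, half is to check that $\sigma(a)$ is an upper bound of $\sigma(A)$. For this I would observe that $\sigma$ is order-preserving: the intrinsic order is compatible with the multiplication and satisfies $s \leqslant t \Leftrightarrow s^{*} \leqslant t^{*}$ by Lemma~\ref{lem:leq}, so from $s \leqslant t$ one obtains $s^{*} s \leqslant t^{*} t$, that is $\sigma(s) \leqslant \sigma(t)$. Since every $s \in A$ satisfies $s \leqslant a$, this gives $\sigma(s) \leqslant \sigma(a)$, as wanted.

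The substance of the proof is to show that $\sigma(a)$ is the \emph{least} upper bound. Let $u$ be any upper bound of $\sigma(A)$ in $S$. For each $s \in A$ the element $\sigma(s)$ is idempotent, so $\sigma(s) \leqslant u$ combined with the characterization $s = s s^{*} t$ of Lemma~\ref{lem:leq} gives $\sigma(s) = \sigma(s)\, u$. On the other hand $s \leqslant a$ gives $s = a\, \sigma(s)$ (the characterization $s = t s^{*} s$), whence
$$ s = a\,\sigma(s) = a\,\sigma(s)\,u = s\,u . $$
By compatibility of the order with the product, $s = s u \leqslant a u$ for every $s \in A$, so $au$ is an upper bound of $A$ and therefore $a \leqslant a u$, because $a$ is the least one.

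The decisive step is to upgrade $a \leqslant au$ to the equality $a = au$: applying once more $s = s s^{*} t$ (now with $t = au$) together with $a a^{*} a = a$ yields $a = a a^{*}(a u) = a u$. From $a = au$ I then get $\sigma(a)\,u = a^{*} a\, u = a^{*}(a u) = a^{*} a = \sigma(a)$, and a final appeal to Lemma~\ref{lem:leq}, using that $\sigma(a)$ is idempotent, converts $\sigma(a)\, u = \sigma(a)$ into $\sigma(a) \leqslant u$. Hence $\sigma(a)$ is the least upper bound of $\sigma(A)$, which is the assertion. I expect the main obstacle to be exactly this point: an upper bound $u$ of the idempotents $\sigma(A)$ need not itself be idempotent (as Example~\ref{ex:cex} warns), so one cannot reason inside $\mathit{\Sigma}$ alone; the trick is to show that $a$ absorbs $u$ on the right and then push the resulting identity back onto $\sigma(a)$.
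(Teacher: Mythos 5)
Your proof is correct: every step is justified by facts the paper records (the equivalences of Lemma~\ref{lem:leq}, compatibility of the intrinsic order with multiplication, and idempotence of $\sigma(s)$), and the chain $s = s u \leqslant au$, hence $a \leqslant au$, upgraded to $a = au$ and then pushed down to $\sigma(a)u = \sigma(a)$, i.e.\ $\sigma(a) \leqslant u$, is exactly what is needed. The paper itself gives no argument here --- it simply cites \cite{Rinow63} and \cite[Lemma~1-17]{Lawson98b} --- and your self-contained argument is essentially the standard proof found in that reference, so there is nothing of substance to compare; if anything, yours is the more complete account.
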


\begin{proof}
The reader may refer to \cite[Lemma~1-17]{Lawson98b}. 
\end{proof}

As a consequence, the map $\sigma$ is Scott-continuous (see e.g.\ \cite[Proposition~~II-2.1]{Gierz03} and adapt the proof to the case of posets that are not directed-complete), and $\mathit{\Sigma}$ is a Scott-closed subset of $S$ and is a retract of $S$ when $S$ is endowed with its Scott topology. 

Another important feature of suprema that we shall need later on is a kind of conditional distributivity property. 

\begin{lemma}\cite{Domanov71}\label{lem:distr}
Let $A$ be a nonempty subset of $S$ and $s \in S$. 
If $\bigvee A$ exists and $a a^* \leqslant s^* s$ for all $a \in A$, then $\bigvee (s A)$ exists and $\bigvee (s A) = s (\bigvee A)$.  
\end{lemma}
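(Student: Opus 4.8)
The plan is to show directly that $s \bigvee A$ is the supremum of $sA$. Write $a_0 = \bigvee A$. Since the intrinsic order is compatible with the product, left multiplication by the fixed element $s$ is order-preserving, so $a \leqslant a_0$ gives $s a \leqslant s a_0$ for every $a \in A$; thus $s a_0$ is at once seen to be an upper bound of $sA$. The whole difficulty lies in proving that it is the \emph{least} upper bound, and for this I expect the hypothesis $a a^* \leqslant s^* s$ to be exactly what is needed to invert the effect of left-multiplying by $s$.

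The key step, which I regard as the heart of the argument, is to establish the cancellation identity $s^* s a = a$ for every $a \in A$. Indeed, $a a^*$ and $s^* s$ are both idempotent, so the assumption $a a^* \leqslant s^* s$ means, in the semilattice $\mathit{\Sigma}$, that $(s^* s)(a a^*) = a a^*$. Combining this with $a = a a^* a$, one computes $s^* s a = (s^* s)(a a^*) a = a a^* a = a$. This is the only place where the hypothesis enters; everything else will be formal.

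With this identity available, the least-upper-bound property follows by a ``multiply by $s^*$, then by $s$'' argument. Let $b$ be any upper bound of $sA$, so $s a \leqslant b$ for all $a \in A$. Multiplying on the left by $s^*$ (order-preserving) and invoking $s^* s a = a$ yields $a \leqslant s^* b$ for every $a$, so $s^* b$ is an upper bound of $A$ and hence $a_0 \leqslant s^* b$. Multiplying this inequality on the left by $s$ gives $s a_0 \leqslant s s^* b$, and since $s s^*$ is idempotent, Lemma~\ref{lem:leq} (an element of the form $\epsilon t$ with $\epsilon$ idempotent satisfies $\epsilon t \leqslant t$) gives $s s^* b \leqslant b$. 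Therefore $s a_0 \leqslant b$, which together with the first paragraph proves that $s a_0 = \bigvee(sA)$, establishing both existence and the claimed formula.

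The main obstacle is really the recognition that $a a^* \leqslant s^* s$ is equivalent to the cancellation identity $s^* s a = a$; once that is seen, the remainder reduces to elementary manipulations with idempotents and the order-compatibility of the product. A secondary point to keep in mind is that $a_0$ need be neither idempotent nor a member of $A$, so one must argue throughout via the universal property of the supremum rather than through any pointwise computation performed on $a_0$ itself.
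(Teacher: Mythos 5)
Your proof is correct, and every step checks out against the paper's conventions: the identity $s^*s\,a = a$ does follow from $aa^* \leqslant s^*s$ via $s^*s\,a = (s^*s)(aa^*)a = (aa^*)a = a$; compatibility of the intrinsic order with multiplication justifies both upper-bound claims; and Lemma~\ref{lem:leq} gives $ss^*b \leqslant b$ exactly as you use it. Note, however, that the paper itself offers no argument for this lemma --- it simply cites Domanov and \cite[Proposition~1-18]{Lawson98b} --- so there is no in-text proof to compare against; what you have written is essentially the standard textbook argument that the citation points to (show $s\bigvee A$ is an upper bound by monotonicity, then pull any upper bound $b$ of $sA$ back through $s^*$ to dominate $\bigvee A$, and push forward again). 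Your closing remark is also well taken: the argument never needs $\bigvee A$ to lie in $A$ or to be idempotent, only its universal property, and the hypothesis $aa^* \leqslant s^*s$ is used exactly once, to get the cancellation identity --- which is indeed the heart of the matter.
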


\begin{proof}
See e.g.\ \cite[Proposition~1-18]{Lawson98b}. 
\end{proof}

As a corollary, we get our first mirror property. 

\begin{proposition}
Assume that $S$ is a mirror semigroup. Then $S$ is conditionally directed-complete if and only if $\mathit{\Sigma}$ is conditionally directed-complete. 
\end{proposition}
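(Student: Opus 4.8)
The plan is to establish the two implications separately: in one direction I extract a supremum in $\mathit{\Sigma}$ from a supremum in $S$ using Rinow's Lemma~\ref{lem:dd}, and in the other I reconstruct a supremum in $S$ from a supremum in $\mathit{\Sigma}$ using Domanov's distributivity Lemma~\ref{lem:distr}. Only the second implication will actually invoke the mirror hypothesis.

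For the forward direction, I would take a directed subset $\mathit{\Delta}$ of $\mathit{\Sigma}$ that is bounded above in $\mathit{\Sigma}$; it is then directed and bounded above in $S$, so conditional directed-completeness of $S$ yields $d = \bigvee \mathit{\Delta}$ in $S$. Since every element of $\mathit{\Delta}$ is idempotent, $\sigma$ fixes $\mathit{\Delta}$ pointwise, so $\sigma(\mathit{\Delta}) = \mathit{\Delta}$; Lemma~\ref{lem:dd} then gives $d = \bigvee \sigma(\mathit{\Delta}) = \sigma(d) = d^* d$, which is idempotent. Thus $d \in \mathit{\Sigma}$, and because any upper bound of $\mathit{\Delta}$ in $\mathit{\Sigma}$ is also an upper bound in $S$, the element $d$ is the supremum of $\mathit{\Delta}$ in $\mathit{\Sigma}$.

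For the converse, let $D$ be a directed subset of $S$ bounded above by some $u \in S$. Since $\sigma$ is order-preserving, $\sigma(D)$ is directed in $\mathit{\Sigma}$ and bounded above by $\sigma(u) = u^* u$, so conditional directed-completeness of $\mathit{\Sigma}$ provides $e = \bigvee \sigma(D)$ in $\mathit{\Sigma}$. Here the mirror hypothesis enters: it guarantees that $e$ is also the supremum of $\sigma(D)$ in $S$. The decisive observation is that $D = u\, \sigma(D)$, since each $d \leqslant u$ satisfies $d = u d^* d = u\, \sigma(d)$ by Lemma~\ref{lem:leq}. I would then apply Lemma~\ref{lem:distr} with $A = \sigma(D)$ and $s = u$; its hypothesis $\alpha \alpha^* \leqslant u^* u$ holds for every idempotent $\alpha = \sigma(d)$, because $\alpha \alpha^* = \sigma(d) = d^* d \leqslant u^* u$ (again from $d \leqslant u$ and the monotonicity of $\sigma$). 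The lemma then yields $\bigvee D = \bigvee(u\, \sigma(D)) = u\, e$, which exists in $S$.

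The main obstacle is this converse direction, and precisely the requirement that $\bigvee \sigma(D)$ exist in $S$ rather than merely in $\mathit{\Sigma}$, since Lemma~\ref{lem:distr} needs a supremum computed in the full semigroup. This is exactly what the mirror hypothesis supplies, and it is the only step where that hypothesis is used; the identity $D = u\, \sigma(D)$ is what lets the distributivity lemma transport the reconstructed supremum back to $D$.
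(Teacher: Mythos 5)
Your proof is correct and takes essentially the same route as the paper: the converse direction (the only one the paper writes out) is exactly the paper's argument --- pass to $\sigma(D)$, use the mirror hypothesis to place its supremum in $S$, then apply Lemma~\ref{lem:distr} together with the identity $D = u\,\sigma(D)$ from Lemma~\ref{lem:leq} to get $\bigvee D = u\,\bigvee\sigma(D)$. The forward direction, which the paper leaves implicit (it follows from the remark accompanying Lemma~\ref{lem:dd}), you fill in correctly with the same tool, Rinow's lemma.
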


\begin{proof}
Assume that $\mathit{\Sigma}$ is conditionally directed-complete, and let $D$ be a directed subset of $S$ that is bounded above. Then $\mathit{\Delta} = \sigma(D)$ is bounded above and directed in $\mathit{\Sigma}$. Let $\delta = \bigvee \mathit{\Delta}$. We show that, if $u$ is an upper bound of $D$ in $S$, then $\bigvee D$ (exists and) equals $u \delta$. For every $\alpha \in \mathit{\Delta}$, $\alpha = \alpha \alpha^* \leqslant u^* u$, so by Lemma~\ref{lem:distr}, $\bigvee (u \mathit{\Delta})$ exists and equals $u \delta$. But, for every $d \in D$, $d \leqslant u$, i.e.\ $u d^* d = d$ by Lemma~\ref{lem:leq}, so that $u \mathit{\Delta} = \{ u d^* d : d \in D \} = \{ d : d \in D \} = D$. Hence $\bigvee D = u \delta$. 
\end{proof}

\section{Separate Scott-continuity and multiplicative way-below relation}\label{sec:meetc}

The second mirror property concerns separate Scott-continuity. A mirror semigroup $S$ is \textit{separately Scott-continuous} if, for all directed subsets $D \subset S$ with supremum, and for all $s \in S$, $\bigvee (D s)$ exists and equals $(\bigvee D) s$. This is tantamount to saying that the map $t \mapsto t s$ is Scott-continuous for all $s \in S$. 
Restricted to the case of semilattices, separate Scott-continuity can be called \textit{meet-continuity} (although \cite[Definition~III-2.1]{Gierz03} proposes a slightly different notion of meet-continuity). 

\begin{proposition}\label{prop:meetcont}
Assume that $S$ is a mirror semigroup. 
Then $S$ is separately Scott-continuous if and only if $\mathit{\Sigma}$ is meet-continuous. 
\end{proposition}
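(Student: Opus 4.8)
The plan is to prove the two implications separately, with the forward direction being routine and the converse carrying all the content. Throughout I will use that the involution $s \mapsto s^*$ is an order-automorphism (by Lemma~\ref{lem:leq}), hence preserves every existing supremum: $\bigvee(A^*) = (\bigvee A)^*$, where $A^* = \{a^* : a \in A\}$. Combined with Lemma~\ref{lem:distr} this yields a right-hand distributivity law: if $\bigvee A$ exists and $a^* a \leqslant s s^*$ for all $a \in A$, then $\bigvee(A s) = (\bigvee A) s$.

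For the implication ``$S$ separately Scott-continuous $\Rightarrow \mathit{\Sigma}$ meet-continuous'', I would take a directed $\mathit{\Delta} \subset \mathit{\Sigma}$ with supremum $\delta$ in $\mathit{\Sigma}$ and an idempotent $\epsilon$. Since $S$ is a mirror semigroup, $\delta$ is also the supremum of $\mathit{\Delta}$ in $S$, so separate Scott-continuity gives $\bigvee(\mathit{\Delta}\epsilon) = \delta\epsilon$ in $S$. But $\mathit{\Delta}\epsilon \subset \mathit{\Sigma}$ and $\delta\epsilon$ is idempotent, so this $S$-supremum is in fact the supremum in $\mathit{\Sigma}$: being the least upper bound in $S$, it lies below every idempotent upper bound, and it is itself idempotent. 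This is exactly meet-continuity.

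For the converse, the strategy is to reduce an arbitrary directed set in $S$ to the idempotents via the source map and then apply conditional distributivity on each side. Given a directed $D \subset S$ with $t = \bigvee D$ and an element $s \in S$, I set $\mathit{\Delta} = \sigma(D)$, which is directed in $\mathit{\Sigma}$ (as $\sigma$ is order-preserving) with $\delta = \bigvee \mathit{\Delta} = \sigma(t) = t^* t$ by Lemma~\ref{lem:dd}. Since each $d \in D$ satisfies $d = t\sigma(d)$ (Lemma~\ref{lem:leq}), we have $Ds = t\,\mathit{\Delta}\,s$, so it suffices to compute the supremum of $\{t\alpha s : \alpha \in \mathit{\Delta}\}$ and to verify it equals $t\delta s = ts$.

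The heart of the argument, and the step I expect to be the main obstacle, is evaluating $\bigvee(\mathit{\Delta} s)$, because the range condition $\alpha^* \alpha = \alpha \leqslant s s^*$ required by the right-hand distributivity need not hold for the elements of $\mathit{\Delta}$. This is precisely where meet-continuity enters: putting $\beta_\alpha = \alpha s s^*$, the family $\mathit{\Delta}' = \mathit{\Delta}\,ss^*$ is directed in $\mathit{\Sigma}$ with supremum $\delta ss^*$ by meet-continuity (hence also in $S$, since $S$ is mirror), every element of $\mathit{\Delta}'$ lies below $ss^*$, and $\alpha s = \beta_\alpha s$ because $ss^*s = s$. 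The truncated family now meets the range condition, so $\bigvee(\mathit{\Delta} s) = \bigvee(\mathit{\Delta}' s) = (\delta ss^*)s = \delta s$. Finally I would left-multiply by $t$ using Lemma~\ref{lem:distr} directly: the hypothesis $(\alpha s)(\alpha s)^* = \alpha ss^* \leqslant \delta = t^* t$ holds since $\alpha ss^* \leqslant \alpha \leqslant \delta$, whence $\bigvee(t\,\mathit{\Delta}\,s) = t\delta s = ts$, that is, $\bigvee(Ds) = ts$, as required.
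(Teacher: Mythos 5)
Your proof is correct, and its converse half follows a genuinely different route from the paper's. The paper proves that direction by an upper-bound argument (following Lawson's Proposition~1-20): it fixes an arbitrary upper bound $u$ of $Ds$, uses Lemma~\ref{lem:dd} and meet-continuity to write $d^*dss^* = \bigvee(\sigma(D)ss^*)$ (an $S$-supremum, thanks to the mirror hypothesis), observes that $\sigma(D)ss^*$ is bounded above by $d^*us^*$, and concludes with the chain $ds = d(d^*dss^*)s \leqslant dd^*us^*s \leqslant u$; Lemma~\ref{lem:distr} is never invoked. You instead compute $\bigvee(Ds)$ outright: you factor $Ds = d\,\sigma(D)\,s$ via $d' = d\,\sigma(d')$, truncate $\sigma(D)$ by $ss^*$ so that the range condition of your involution-dual of Lemma~\ref{lem:distr} is satisfied, and finish with a left-handed application of Lemma~\ref{lem:distr}. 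Your two auxiliary observations --- that the involution, being an order-automorphism, transports Domanov's left-handed law into a right-handed one, and that multiplying $\sigma(D)$ by $ss^*$ changes nothing after right multiplication by $s$ yet forces the range condition --- are exactly what replaces the paper's inequality chase; the hypotheses enter at the same spot in both arguments, namely where meet-continuity plus the mirror property upgrade the $\mathit{\Sigma}$-supremum of the truncated family to an $S$-supremum. The paper's proof is shorter and more self-contained; yours is more modular, reuses Domanov's lemma as a black box, and makes the role of each hypothesis more visible. One point you gloss over, harmlessly: Lemma~\ref{lem:dd} gives $\delta = \bigvee\sigma(D)$ as a supremum in $S$, while meet-continuity needs it as a supremum in $\mathit{\Sigma}$; this is automatic, since a supremum in $S$ that lies in $\mathit{\Sigma}$ is also the supremum in $\mathit{\Sigma}$, as the paper notes just before Lemma~\ref{lem:dd}. (Your forward direction is in substance the same as the paper's, which simply invokes Scott-closedness of $\mathit{\Sigma}$ in $S$.)
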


\begin{proof}
Since $\mathit{\Sigma}$ is a Scott-closed subset of $S$, separate Scott-continuity of $S$ clearly implies meet-continuity of $\mathit{\Sigma}$. 
For the converse statement, we mainly follow the lines of the proof of \cite[Proposition~1-20]{Lawson98b}. Assume that $\mathit{\Sigma}$ is meet-continuous. Let $D$ be a directed subset of $S$ with supremum $d$, and let $s \in S$. The set $D s$ is bounded above by $d s$. Now let $u$ be some upper bound of $D s$. By Lemma~\ref{lem:dd} we have 
$$
d^* d s s^* = \sigma(\bigvee D) s s^* = (\bigvee \sigma(D)) s s^*. 
$$
Since $\sigma(D)$ is directed in $\mathit{\Sigma}$ and $\mathit{\Sigma}$ is meet-continuous, this gives   
$$
d^* d s s^* =  \bigvee (\sigma(D) s s^*).  
$$
Now $D s$ is bounded above by $u$, thus $\sigma(D) s s^*$ is bounded above by $d^* u s^*$, so that $d^* d s s^* \leqslant d^* u s^*$. We get   
$$
d s = d (d^* d s s^*) s \leqslant d d^* u s^* s \leqslant u. 
$$
Hence, $d s = \bigvee (D s)$. 
\end{proof}

\begin{problem}[Example~\ref{ex:cosets} continued]
Is the coset monoid $\mathrsfs{C}(G)$ of a group $G$ separately Scott-continuous?
\end{problem}

Since $\mathit{\Sigma}$ is commutative, we deduce that a mirror semigroup is separately Scott-continuous if and only if, for all directed subsets $D$ with supremum, and for all $s$, $\bigvee (s D)$ exists and equals $s (\bigvee D)$. This will help in demonstrating Lemma~\ref{lem:wb}. 

The following result, although easily proved, will be of crucial importance for establishing Proposition~\ref{prop:mult} and Theorem~\ref{thm:continuous}. It highlights the role played by the \textit{subset system} made up of directed subsets for deriving mirror properties. For instance, mirror properties on complete distributivity (also called \textit{supercontinuity}), where arbitrary subsets replace directed subsets, would probably fail to be true (see however the mirror property \cite[Proposition~1-20]{Lawson98b} and the one on infinite distributivity proved by Resende \cite{Resende06}). 

\begin{lemma}\label{lem:maj}
Let $D$ be a directed subset of $S$. Then $d$ is the greatest element of $D d^* d$, for all $d \in D$. 
\end{lemma}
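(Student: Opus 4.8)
The plan is to verify two things: that $d$ is an element of the set $D d^* d = \{ e d^* d : e \in D \}$, and that $d$ is an upper bound for this set; together these say precisely that $d$ is its greatest element.

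First I would establish membership. Since $d d^* d = d$ holds in any inverse semigroup, choosing the factor $e = d$ immediately yields $d = d d^* d \in D d^* d$.

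Next I would show that $e d^* d \leqslant d$ for every $e \in D$. This cannot follow from properties of $d$ and $e$ in isolation (in a group, for instance, $d^* d$ is the identity, so the inequality would force $e = d$), and so the directedness of $D$ is genuinely essential here. Accordingly, I would invoke directedness to choose $r \in D$ with $e \leqslant r$ and $d \leqslant r$. From $d \leqslant r$ together with the characterization $s \leqslant t \Leftrightarrow s = t s^* s$ of Lemma~\ref{lem:leq}, I obtain $d = r d^* d$. On the other hand, from $e \leqslant r$ together with the compatibility of the intrinsic order with multiplication (multiplying on the right by the idempotent $d^* d$), I get $e d^* d \leqslant r d^* d$. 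Combining these two facts yields $e d^* d \leqslant r d^* d = d$, as desired.

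The argument presents no real obstacle; the single point requiring care is to recognize that directedness should be used to dominate the running factor $e$ by an element $r$ that also lies above $d$, after which the ostensibly larger product $r d^* d$ collapses back to $d$ through the appropriate form of the order characterization in Lemma~\ref{lem:leq}.
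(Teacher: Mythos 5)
Your proof is correct and takes essentially the same approach as the paper's: both use directedness to pick a common upper bound $r \in D$ of $e$ and $d$, then bound $e d^* d \leqslant r d^* d$ by compatibility of the intrinsic order with multiplication. The only cosmetic difference is that the paper majorizes via $r r^* d \leqslant d$ (idempotency of $r r^*$), whereas you collapse $r d^* d = d$ directly from the characterization in Lemma~\ref{lem:leq}; both steps are immediate.
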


\begin{proof}
Let $d, d_1 \in D$. Since $D$ is directed, there is some $d_2 \in D$ such that $d \leqslant d_2$ and $d_1 \leqslant d_2$. Thus, $d_1 d^* d \leqslant d_2^{} d_2^* d \leqslant d$, so $d$ is an upper bound of $D d^* d$. Since $d$ is in $D d^* d$, the assertion is proved. 
\end{proof}

Now we recall the way-below relation. If $s, t$ are elements of a poset, we say that $s$ is \textit{way-below} $t$, 
if, for every directed subset $D$ with supremum, $t \leqslant \bigvee D$ implies $s \leqslant d$ for some $d \in D$. 
Denoting by $\ll$ the way-below relation on $S$ and by $\prec\!\!\!\prec$ the way-below relation on $\mathit{\Sigma}$, we have the following lemma. 

\begin{lemma}\label{lem:wb}
Assume that $S$ is a separately Scott-continuous mirror semigroup. Then for all $s, t \in S$, $s \ll t$ if and only if $s \leqslant t$ and $\sigma(s) \prec\!\!\!\prec \sigma(t)$. 
\end{lemma}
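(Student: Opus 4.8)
The plan is to prove the two implications separately, isolating the easy condition $s \leqslant t$ as a common ingredient and concentrating the work on the interaction between $\ll$ on $S$ and $\prec\!\!\!\prec$ on $\mathit{\Sigma}$.

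For the forward implication, assume $s \ll t$. Applying the definition of way-below to the singleton directed set $\{t\}$ (whose supremum is $t$) immediately yields $s \leqslant t$. To obtain $\sigma(s) \prec\!\!\!\prec \sigma(t)$, I would start from a directed $\mathit{\Delta} \subseteq \mathit{\Sigma}$ with supremum $\delta$ in $\mathit{\Sigma}$ satisfying $\sigma(t) \leqslant \delta$, and transport it into $S$: since $S$ is mirror, $\delta = \bigvee \mathit{\Delta}$ in $S$ as well, and $t\mathit{\Delta}$ is directed in $S$ by compatibility of the order with multiplication. Separate Scott-continuity, taken in the left-multiplication form (legitimate here because $\mathit{\Sigma}$ is commutative, as noted after Proposition~\ref{prop:meetcont}), gives $\bigvee(t\mathit{\Delta}) = t\delta$, and from $\sigma(t) \leqslant \delta$ one checks $t = t\sigma(t) \leqslant t\delta$. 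Feeding $t \leqslant \bigvee(t\mathit{\Delta})$ into $s \ll t$ produces some $\alpha \in \mathit{\Delta}$ with $s \leqslant t\alpha$, whence $\sigma(s) \leqslant \sigma(t\alpha)$ by monotonicity of $\sigma$. A short idempotent computation, using that $\sigma(t)$ and $\alpha$ commute, gives $\sigma(t\alpha) = \alpha\sigma(t) \leqslant \alpha$, so $\sigma(s) \leqslant \alpha \in \mathit{\Delta}$, which is exactly what $\sigma(s) \prec\!\!\!\prec \sigma(t)$ demands.

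For the converse, assume $s \leqslant t$ and $\sigma(s) \prec\!\!\!\prec \sigma(t)$, and let $D$ be directed in $S$ with $d = \bigvee D \geqslant t$. Applying $\sigma$ and Lemma~\ref{lem:dd} gives $\sigma(t) \leqslant \sigma(d) = \bigvee \sigma(D)$ with $\sigma(D)$ directed in $\mathit{\Sigma}$; the hypothesis $\sigma(s) \prec\!\!\!\prec \sigma(t)$ then supplies some $d' \in D$ with $\sigma(s) \leqslant \sigma(d')$. The remaining task is to upgrade this inequality of sources to $s \leqslant d'$. Here I would invoke Lemma~\ref{lem:maj}: the greatest element of $D\sigma(d') = D d'^* d'$ is $d'$, so $d' = \bigvee(D\sigma(d'))$; on the other hand, separate Scott-continuity identifies $\bigvee(D\sigma(d')) = d\,\sigma(d')$, whence $d\,\sigma(d') = d'$. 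Since $s \leqslant d$ gives $s = d\,\sigma(s)$ by Lemma~\ref{lem:leq}, and $\sigma(s) = \sigma(s)\sigma(d')$ because $\sigma(s) \leqslant \sigma(d')$, commutativity of idempotents lets me rewrite $s = d\,\sigma(s)\sigma(d') = d\,\sigma(d')\sigma(s) = d'\sigma(s)$, which is precisely $s \leqslant d'$.

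The main obstacle is this last step of the converse, namely passing from $\sigma(s) \leqslant \sigma(d')$ back to $s \leqslant d'$. This is where the two hypotheses genuinely cooperate: separate Scott-continuity is needed to evaluate $\bigvee(D\sigma(d'))$ as $d\,\sigma(d')$, Lemma~\ref{lem:maj} independently evaluates the same supremum as $d'$, and only the resulting identity $d\,\sigma(d') = d'$ triggers the algebraic collapse $s = d'\sigma(s)$. By contrast, the forward direction is essentially a direct construction once one has the idea of multiplying $\mathit{\Delta}$ on the left by $t$.
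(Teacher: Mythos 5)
Your proof is correct and follows essentially the same route as the paper's: the forward direction by left-multiplying a directed subset $\mathit{\Delta} \subseteq \mathit{\Sigma}$ by $t$ (using the left-multiplication form of separate Scott-continuity and the mirror hypothesis), and the converse by combining Lemma~\ref{lem:dd}, Lemma~\ref{lem:maj}, and separate Scott-continuity to evaluate $(\bigvee D)\,\sigma(d')$ as $d'$. The only difference is organizational: the paper concludes with the single chain $s = s\sigma(s) \leqslant t\sigma(d) \leqslant (\bigvee D)\sigma(d) = \bigvee (D d^* d) = d$, whereas you first isolate the identity $d\,\sigma(d') = d'$ and then perform the idempotent computation $s = d'\sigma(s)$ — a rearrangement of the same argument.
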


\begin{proof}
Assume that $s \ll t$, and let $\mathit{\Delta}$ be some directed subset of $\mathit{\Sigma}$ with supremum such that $\sigma(t) \leqslant \bigvee \mathit{\Delta}$. Then $t = t \sigma(t) \leqslant \bigvee (t \mathit{\Delta})$. Since $t \mathit{\Delta}$ is directed and $s \ll t$, there is some $\delta \in \mathit{\Delta}$ such that $s \leqslant t \delta$. This also gives $s^* \leqslant \delta t^*$, thus $\sigma(s) = s^* s \leqslant \delta \sigma(t) \delta = \sigma(t) \delta \leqslant \delta$. We therefore get $\sigma(s) \prec\!\!\!\prec \sigma(t)$. 

Conversely, assume that $s \leqslant t$ and $\sigma(s) \prec\!\!\!\prec \sigma(t)$, and let $D$ be a directed subset of $S$ with supremum such that $t \leqslant \bigvee D$. We have $\sigma(t) \leqslant \sigma(\bigvee D) =  \bigvee \sigma(D)$, and $\sigma(D)$ is directed in $\mathit{\Sigma}$, thus there is some $d\in D$ such that $\sigma(s) \leqslant \sigma(d)$. Now $s = s \sigma(s) \leqslant s \sigma(d) \leqslant t \sigma(d) \leqslant (\bigvee D) \sigma(d) = \bigvee (D d^* d)$. Using Lemma~\ref{lem:maj}, we have $s \leqslant d$, and this shows that $s \ll t$. 
\end{proof}

\begin{corollary}\label{cor:wb}
Assume that $S$ is a separately Scott-continuous mirror semigroup. Then for all $\epsilon, \delta \in \mathit{\Sigma}$, $\epsilon \prec\!\!\!\prec \delta$ if and only if $\epsilon \ll \delta$.
\end{corollary}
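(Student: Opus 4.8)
The plan is to reduce the statement to Lemma~\ref{lem:wb} by specializing it to idempotents. The starting point is the observation that the source map $\sigma$ fixes $\mathit{\Sigma}$ pointwise: for any $\epsilon \in \mathit{\Sigma}$ we have $\sigma(\epsilon) = \epsilon^* \epsilon = \epsilon \epsilon = \epsilon$, since $\epsilon^* = \epsilon$ and $\epsilon$ is idempotent. Hence $\sigma(\epsilon) = \epsilon$ and $\sigma(\delta) = \delta$ whenever $\epsilon, \delta \in \mathit{\Sigma}$.

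Feeding $s = \epsilon$ and $t = \delta$ into Lemma~\ref{lem:wb} then shows that $\epsilon \ll \delta$ if and only if $\epsilon \leqslant \delta$ and $\epsilon \prec\!\!\!\prec \delta$. In particular $\epsilon \ll \delta$ implies $\epsilon \prec\!\!\!\prec \delta$, which is the forward direction of the corollary. For the converse I would show that $\epsilon \prec\!\!\!\prec \delta$ already forces $\epsilon \leqslant \delta$, so that the extra conjunct is automatic. This rests on the general fact that way-below refines the order: applying the definition of $\prec\!\!\!\prec$ to the singleton $\{\delta\}$---which is a directed subset of $\mathit{\Sigma}$ admitting $\delta$ as supremum---and using $\delta \leqslant \bigvee \{\delta\} = \delta$ yields $\epsilon \leqslant \delta$. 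Then $\epsilon \leqslant \delta$ together with $\epsilon \prec\!\!\!\prec \delta$ gives $\epsilon \ll \delta$ by a second application of Lemma~\ref{lem:wb}.

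I anticipate no genuine difficulty: the corollary is essentially a transcription of Lemma~\ref{lem:wb} once one notes that $\sigma$ acts as the identity on $\mathit{\Sigma}$. The only step requiring a moment's attention is the implication $\epsilon \prec\!\!\!\prec \delta \Rightarrow \epsilon \leqslant \delta$, which is precisely what allows the two conditions on the right-hand side of Lemma~\ref{lem:wb} to collapse into the single condition $\epsilon \prec\!\!\!\prec \delta$.
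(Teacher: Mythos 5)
Your proof is correct and follows the same route the paper intends: the corollary is stated without proof precisely because it is the specialization of Lemma~\ref{lem:wb} to idempotents, using $\sigma(\epsilon)=\epsilon$ for $\epsilon\in\mathit{\Sigma}$. Your singleton argument showing $\epsilon \prec\!\!\!\prec \delta \Rightarrow \epsilon \leqslant \delta$ correctly supplies the one detail needed to collapse the conjunction in the lemma.
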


The way-below relation on $S$ is \textit{multiplicative} if $s \ll t$ and $s' \ll t'$ imply $s s' \ll t t'$. If $S$ reduces to a semilattice, this amounts to the usual definition (see \cite[Definition~III-5.8]{Gierz03}). 

\begin{proposition}\label{prop:mult}
Assume that $S$ is a separately Scott-continuous mirror semigroup. Then the way-below relation on $S$ is multiplicative if and only if the way-below relation on $\mathit{\Sigma}$ is multiplicative. 
\end{proposition}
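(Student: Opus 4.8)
The plan is to treat the two implications separately, the forward one being immediate and the converse carrying all the weight.

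For the \emph{forward} implication, suppose $\ll$ is multiplicative on $S$ and take $\epsilon \prec\!\!\!\prec \delta$ and $\epsilon' \prec\!\!\!\prec \delta'$ in $\mathit{\Sigma}$. By Corollary~\ref{cor:wb} these say $\epsilon \ll \delta$ and $\epsilon' \ll \delta'$ in $S$, so multiplicativity gives $\epsilon \epsilon' \ll \delta \delta'$. As products of commuting idempotents, $\epsilon \epsilon'$ and $\delta \delta'$ lie in $\mathit{\Sigma}$, so Corollary~\ref{cor:wb} read backwards yields $\epsilon \epsilon' \prec\!\!\!\prec \delta \delta'$, i.e.\ $\prec\!\!\!\prec$ is multiplicative on $\mathit{\Sigma}$.

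For the \emph{converse}, assume $\prec\!\!\!\prec$ is multiplicative on $\mathit{\Sigma}$ and let $s \ll t$, $s' \ll t'$. By Lemma~\ref{lem:wb} we have $s \leqslant t$, $s' \leqslant t'$, $\sigma(s) \prec\!\!\!\prec \sigma(t)$ and $\sigma(s') \prec\!\!\!\prec \sigma(t')$; compatibility of the intrinsic order gives $ss' \leqslant tt'$, so by Lemma~\ref{lem:wb} again it remains to prove $\sigma(ss') \prec\!\!\!\prec \sigma(tt')$. The obstruction is that $\sigma(ss') = s'^{*}\sigma(s)s'$ and $\sigma(tt') = t'^{*}\sigma(t)t'$ are \emph{conjugates}, so multiplicativity of $\prec\!\!\!\prec$ cannot be applied to them head on --- indeed conjugation does not preserve $\prec\!\!\!\prec$ between arbitrary idempotents. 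The device I would use is that conjugation by $t'$ is a \emph{Munn isomorphism}: $\Phi : \psi \mapsto t'\psi t'^{*}$ is an order isomorphism from the principal ideal $\{\psi : \psi \leqslant t'^{*}t'\}$ onto $\{\chi : \chi \leqslant t't'^{*}\}$, with inverse $\chi \mapsto t'^{*}\chi t'$. Using that $\mathit{\Sigma}$ is meet-continuous (Proposition~\ref{prop:meetcont}), one checks that on a principal ideal the way-below relation agrees with the ambient one --- replace a testing directed family $D$ by $\{\beta \wedge t'^{*}t' : \beta \in D\}$ and invoke meet-continuity to recover its supremum --- so $\Phi$ transports $\prec\!\!\!\prec$ faithfully in both directions.

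Both $\sigma(ss')$ and $\sigma(tt')$ lie in the ideal $\{\psi : \psi \leqslant \sigma(t')\}$, so it suffices to prove the transported inequality $\Phi(\sigma(ss')) \prec\!\!\!\prec \Phi(\sigma(tt'))$. A short computation with the inverse-semigroup identities (using $s' = t'\sigma(s')$ and that $\Phi$ preserves meets) gives
\[
\Phi(\sigma(ss')) = \sigma(s) \wedge \bigl(t'\sigma(s')t'^{*}\bigr), \qquad \Phi(\sigma(tt')) = \sigma(t) \wedge \bigl(t't'^{*}\bigr).
\]
Now I would split off the two factors: $\sigma(s) \prec\!\!\!\prec \sigma(t)$ is a hypothesis, while applying $\Phi$ to $\sigma(s') \prec\!\!\!\prec \sigma(t')$ and noting $\Phi(\sigma(t')) = t't'^{*}$ yields $t'\sigma(s')t'^{*} \prec\!\!\!\prec t't'^{*}$. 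Multiplicativity of $\prec\!\!\!\prec$ on $\mathit{\Sigma}$ then gives $\sigma(s) \wedge (t'\sigma(s')t'^{*}) \prec\!\!\!\prec \sigma(t) \wedge (t't'^{*})$, which is exactly the transported inequality; pulling it back through $\Phi^{-1}$ produces $\sigma(ss') \prec\!\!\!\prec \sigma(tt')$. I expect the delicate step to be the faithful transport of $\prec\!\!\!\prec$ along the Munn isomorphism: one must verify both that $\Phi$ is genuinely an order isomorphism of the two ideals and that the way-below relation computed inside a principal ideal coincides with the one computed in all of $\mathit{\Sigma}$, the latter being precisely where meet-continuity --- equivalently separate Scott-continuity --- is indispensable. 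Once this lemma is in place the remainder is routine bookkeeping with $\sigma(ss') = s'^{*}\sigma(s)s'$ and the two given multiplicativity facts.
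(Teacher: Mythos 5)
Your proof is correct, but the converse direction takes a genuinely different route from the paper's. The paper never reduces to the statement $\sigma(ss') \prec\!\!\!\prec \sigma(tt')$ at all: instead it tests the definition of $\ll$ directly, taking a directed set $D \subset S$ with supremum $d_0 \geqslant su$, using separate Scott-continuity and Scott-continuity of $\sigma$ to get $s^*s\,uu^* \leqslant \bigvee \sigma(s^* D u^*)$, and then applying multiplicativity of $\prec\!\!\!\prec$ to the pair $s'^*s' \prec\!\!\!\prec t'^*t'$ \emph{and} the range-idempotent relation $s\,s^* \prec\!\!\!\prec t\,t^*$ (in your notation) --- the latter obtained by applying the involution, which is an order automorphism of $S$ and hence preserves $\ll$, to $s \ll t$. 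Mixing source idempotents of one pair with range idempotents of the other is precisely what makes the factorization $rt = r(r^*r\,tt^*)t$ and the ensuing chain of inequalities close up, and it sidesteps entirely the conjugation obstruction you identify: the paper never needs to compare $s'^*\sigma(s)s'$ with $t'^*\sigma(t)t'$. Your approach meets that obstruction head-on, via the Munn isomorphism $\psi \mapsto t'\psi t'^*$ together with the transport lemma that on a principal ideal of a meet-continuous semilattice the relative way-below relation agrees with the ambient one (one direction because suprema of subsets of a principal ideal computed in the ideal are ambient suprema, the other by your $\{\beta \wedge t'^*t'\}$ replacement, which is where meet-continuity enters); I checked the computations $\Phi(\sigma(ss')) = \sigma(s) \wedge (t'\sigma(s')t'^*)$ and $\Phi(\sigma(tt')) = \sigma(t) \wedge (t't'^*)$, and they are right. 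What each approach buys: the paper's is shorter, purely computational, and stays inside $S$ with no auxiliary lemma; yours is more structural, reduces the whole question to semilattice theory, and isolates a reusable fact (faithful transport of $\prec\!\!\!\prec$ along Munn isomorphisms under meet-continuity) at the cost of having to prove that transport lemma carefully in both directions. If you write yours up in full, do spell out both halves of the transport lemma, since the "ambient implies relative" half, though easy, is used twice (once before applying $\Phi$ and once before applying $\Phi^{-1}$).
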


\begin{proof}
If $S$ has a multiplicative way-below relation, the previous corollary ensures that $\mathit{\Sigma}$ also has a multiplicative way-below relation. 
Conversely, assume that $\prec\!\!\!\prec$ is multiplicative, and let $r, s, t, u \in S$ such that $r \ll s$ and $t \ll u$. Let $D$ be a directed subset of $S$ with supremum $d_0$ such that $s u \leqslant d_0$. Then $s^* s u u^* \leqslant s^* d_0 u^*$, and since $s^* s u u^*$ is idempotent, $s^* s u u^* \leqslant \sigma(s^* d_0 u^*)$. Since $S$ is separately Scott-continuous and $s^* D u^*$ is directed, we have $s^* d_0 u^* = \bigvee (s^* D u^*)$, and by Scott-continuity of $\sigma$ we deduce 
\begin{equation}\label{eq:eq2}
s^* s u u^* \leqslant \bigvee \sigma(s^* D u^*). 
\end{equation} 
By Lemma~\ref{lem:wb}, $r^* r \prec\!\!\!\prec s^* s$, and similarly $t t^* \prec\!\!\!\prec u u^*$. Since $\prec\!\!\!\prec$ is multiplicative, this gives $r^* r t t^* \prec\!\!\!\prec s^* s u u^*$. Combining this with Equation~(\ref{eq:eq2}), we see that there is some $d \in D$ such that $r^* r t t^* \leqslant \sigma(s^* d u^*)$. 
Hence, 
\begin{align*}
r t &= r (r^* r t t^*) t \\
&\leqslant s \sigma(s^* d u^*) u \\
&\leqslant ((s u) d^*) (s s^*) d (u^* u) \\
&\leqslant (d_0^{} d_0^*) (s s^*) d (u^* u) \\
&\leqslant d. 
\end{align*}
This proves that $r t \ll s u$, i.e.\ that $\ll$ is multiplicative. 
\end{proof}

\section{Continuity, algebraicity}\label{sec:cont}

A poset is \textit{continuous} if $\{ t : t \mbox{ way-below } s \}$ is directed with supremum equal to $s$, for all $s$. A \textit{dcpo} is a directed-complete poset, and a \textit{domain} is a continuous dcpo. 
A poset is \textit{algebraic} if every element $s$ is the directed supremum of the compact elements below it (where an element is \textit{compact} if it is way-below itself). Or equivalently, by \cite[Proposition~I-4.3]{Gierz03}, if the poset is continuous and if $t \ll s$ implies $t \leqslant k \leqslant s$ for some compact element $k$. 
A mirror semigroup is \textit{continuous} (resp.\ \textit{algebraic}) if it is continuous (resp.\ \textit{algebraic}) with respect to its intrinsic partial order. 

\begin{lemma}\label{lem:cpc}
A continuous mirror semigroup is separately Scott-continuous. 
\end{lemma}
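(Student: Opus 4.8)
The plan is to reduce the statement to meet-continuity of the idempotent semilattice and then invoke Proposition~\ref{prop:meetcont}. Since $S$ is assumed to be a mirror semigroup, that proposition tells us that $S$ is separately Scott-continuous as soon as $\mathit{\Sigma}$ is meet-continuous, so the whole burden is to deduce meet-continuity of $\mathit{\Sigma}$ from continuity of $S$.

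To prove meet-continuity, I would fix a directed subset $\mathit{\Delta} \subseteq \mathit{\Sigma}$ admitting a supremum $\delta$ in $\mathit{\Sigma}$, together with an idempotent $\epsilon$. By the mirror hypothesis $\delta$ is also the supremum of $\mathit{\Delta}$ in $S$, and this is precisely the point that lets me test the way-below relation of $S$ against $\mathit{\Delta}$. The image $\epsilon \mathit{\Delta}$ is directed and bounded above by $\epsilon \delta$, so it remains to show that $\epsilon \delta$ is its least upper bound. Given any upper bound $u$ of $\epsilon \mathit{\Delta}$, I would use continuity of $S$ at the idempotent $\epsilon \delta$ to write $\epsilon \delta = \bigvee \{ x : x \ll \epsilon \delta \}$ and thereby reduce the claim to proving $x \leqslant u$ for each $x \ll \epsilon \delta$.

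The key observation that makes this work is that any $x$ with $x \leqslant \epsilon \delta$ is itself idempotent: by Lemma~\ref{lem:leq} one has $x = (\epsilon \delta)\, x^* x$, a product of commuting idempotents. Fixing $x \ll \epsilon \delta$, we then have $x \leqslant \epsilon \delta \leqslant \delta = \bigvee \mathit{\Delta}$, so directedness of $\mathit{\Delta}$ together with $x \ll \epsilon \delta$ yields some $\alpha \in \mathit{\Delta}$ with $x \leqslant \alpha$; combining this with $x \leqslant \epsilon \delta \leqslant \epsilon$ and the fact that the product in $\mathit{\Sigma}$ is the meet, we obtain $x \leqslant \epsilon \alpha \leqslant u$. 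Hence $\epsilon \delta = \bigvee (\epsilon \mathit{\Delta})$, the semilattice $\mathit{\Sigma}$ is meet-continuous, and Proposition~\ref{prop:meetcont} finishes the argument.

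The main obstacle to guard against is circularity. The natural temptation is to transfer continuity from $S$ to $\mathit{\Sigma}$ and then argue inside $\mathit{\Sigma}$, but the identification of the way-below relations of $S$ and $\mathit{\Sigma}$ given in Corollary~\ref{cor:wb} already presupposes separate Scott-continuity, which is exactly what we are trying to establish. The device that avoids this is to keep working with the way-below relation of $S$ itself and to exploit that elements way-below an idempotent are automatically idempotent; this makes continuity of $S$ localize to $\mathit{\Sigma}$ without any prior knowledge that $\mathit{\Sigma}$ is continuous.
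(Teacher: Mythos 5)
Your proposal is correct and follows essentially the same route as the paper's own proof: reduce to meet-continuity of $\mathit{\Sigma}$ via Proposition~\ref{prop:meetcont}, then, for an upper bound $u$ of $\epsilon\mathit{\Delta}$, test each $x \ll \epsilon\delta$ against the directed set $\mathit{\Delta}$ (whose supremum lies in $S$ by the mirror hypothesis), using the fact that elements below an idempotent are idempotent to conclude $x \leqslant \epsilon\alpha \leqslant u$ and then invoke continuity of $S$ at $\epsilon\delta$. Your explicit remarks on where the mirror hypothesis enters and on avoiding the circular use of Corollary~\ref{cor:wb} are accurate refinements of points the paper leaves implicit.
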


\begin{proof}
Assume that $S$ is a continuous mirror semigroup. We show that $S$ is separately Scott-continuous, or equivalently by Proposition~\ref{prop:meetcont} that $\mathit{\Sigma}$ is meet-continuous. Let $\mathit{\Delta}$ be a directed subset of $\mathit{\Sigma}$ with supremum $\delta = \bigvee \mathit{\Delta}$, and let $\epsilon \in \mathit{\Sigma}$. Then $\epsilon \mathit{\Delta}$ is bounded above by $\epsilon \delta$. Now let $u$ be some upper bound of $\epsilon \mathit{\Delta}$ in $S$. We prove that $\epsilon \delta \leqslant u$. For this purpose, let $s \ll \epsilon \delta$. Since $s$ is less than the idempotent element $\epsilon \delta$, $s$ is idempotent itself. 
Moreover, we have $s \ll \delta$, so there exists some $\delta_1 \in \mathit{\Delta}$ such that $s \leqslant \delta_1$. Thus, $s = s s \leqslant \epsilon \delta_1 \leqslant u$. Since $S$ is continuous, we deduce that $\epsilon \delta \leqslant u$, hence $\epsilon \delta = \bigvee (\epsilon \mathit{\Delta})$, so $\mathit{\Sigma}$ is continuous, and the result follows. 
\end{proof}

\begin{lemma}\label{lem:cont}
Assume that $S$ is a mirror semigroup. 
If $S$ is a continuous poset (resp.\ a dcpo, a domain, an algebraic poset), then $\mathit{\Sigma}$ is a continuous poset (resp.\ a dcpo, a domain, an algebraic poset). 
\end{lemma}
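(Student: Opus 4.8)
The plan is to treat the four properties by reducing, in each case, a computation in $\mathit{\Sigma}$ to one already available in $S$, using two elementary but decisive observations. First, an element below an idempotent is itself idempotent (exactly as argued in the proof of Lemma~\ref{lem:cpc}), so that many sets attached to an idempotent automatically lie inside $\mathit{\Sigma}$. Second, if a subset $A \subset \mathit{\Sigma}$ admits a supremum in $S$ that happens to belong to $\mathit{\Sigma}$, then that element is also the supremum of $A$ computed in the subposet $\mathit{\Sigma}$, since every $\mathit{\Sigma}$-upper bound of $A$ is an $S$-upper bound. Together these let me compute way-below sets and directed suprema in $S$ and then restrict them to $\mathit{\Sigma}$ without loss.

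I would begin with the dcpo case, which needs only Lemma~\ref{lem:dd}. Given a directed subset $\mathit{\Delta} \subset \mathit{\Sigma}$, it is directed in $S$, hence has a supremum $d = \bigvee \mathit{\Delta}$ in $S$. Since $\sigma$ fixes idempotents we have $\sigma(\mathit{\Delta}) = \mathit{\Delta}$, so Lemma~\ref{lem:dd} gives $d = \bigvee \mathit{\Delta} = \bigvee \sigma(\mathit{\Delta}) = \sigma(d) \in \mathit{\Sigma}$; by the second observation $d$ is the supremum of $\mathit{\Delta}$ in $\mathit{\Sigma}$, so $\mathit{\Sigma}$ is a dcpo.

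For the continuous case, since $S$ is continuous it is separately Scott-continuous by Lemma~\ref{lem:cpc}, so Lemma~\ref{lem:wb} and Corollary~\ref{cor:wb} are at my disposal. Fix $\delta \in \mathit{\Sigma}$. If $s \ll \delta$ in $S$ then $s \leqslant \delta$, hence $s$ is idempotent, and Corollary~\ref{cor:wb} yields $s \prec\!\!\!\prec \delta$; conversely any $\epsilon \in \mathit{\Sigma}$ with $\epsilon \prec\!\!\!\prec \delta$ satisfies $\epsilon \ll \delta$. Thus the $S$-way-below set of $\delta$ coincides, as a subset of $S$ with the inherited order, with the $\mathit{\Sigma}$-way-below set $\{\epsilon \in \mathit{\Sigma} : \epsilon \prec\!\!\!\prec \delta\}$. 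By continuity of $S$ this set is directed with supremum $\delta$ in $S$; as $\delta \in \mathit{\Sigma}$, it is also directed with supremum $\delta$ in $\mathit{\Sigma}$, so $\mathit{\Sigma}$ is continuous. The domain case is then immediate, a domain being a continuous dcpo and both properties having just been transferred.

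Finally, for the algebraic case I would use the equivalent characterization recalled before the lemma: it suffices to check that $\mathit{\Sigma}$ is continuous and that $\prec\!\!\!\prec$ factors through compact idempotents. Since $S$ is algebraic it is in particular continuous, hence separately Scott-continuous by Lemma~\ref{lem:cpc}, so Corollary~\ref{cor:wb} applies and $\mathit{\Sigma}$ is continuous by the previous case. If $\epsilon \prec\!\!\!\prec \delta$ in $\mathit{\Sigma}$ then $\epsilon \ll \delta$ in $S$ by Corollary~\ref{cor:wb}, so algebraicity of $S$ furnishes a compact $k \in S$ with $\epsilon \leqslant k \leqslant \delta$. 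Again $k \leqslant \delta$ forces $k \in \mathit{\Sigma}$, and Corollary~\ref{cor:wb} applied to $k \ll k$ shows $k \prec\!\!\!\prec k$, i.e.\ $k$ is compact in $\mathit{\Sigma}$; hence $\prec\!\!\!\prec$ factors through compact elements of $\mathit{\Sigma}$ and $\mathit{\Sigma}$ is algebraic. The step demanding the most care is the continuous case, where one must first secure separate Scott-continuity through Lemma~\ref{lem:cpc} in order to invoke Corollary~\ref{cor:wb}, and then verify that the way-below set of an idempotent computed in $S$ is genuinely the same object, both as a set and for the order, as the one computed in $\mathit{\Sigma}$.
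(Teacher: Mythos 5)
Your proof is correct, and its overall skeleton (treat each property separately, exploit that anything below an idempotent is idempotent, transfer suprema from $S$ to $\mathit{\Sigma}$ via the observation that an $S$-supremum lying in $\mathit{\Sigma}$ is automatically the $\mathit{\Sigma}$-supremum) matches the paper's, but two of the cases are argued by genuinely different means. For the dcpo case the paper simply cites the fact that $\mathit{\Sigma}$, being Scott-closed, is a sub-dcpo of $S$ \cite[Exercise~II-1.26(ii)]{Gierz03}; you instead unwind this into a direct computation with Lemma~\ref{lem:dd} (namely $d = \bigvee \sigma(\mathit{\Delta}) = \sigma(d) \in \mathit{\Sigma}$), which is self-contained and makes transparent that the mirror hypothesis is not needed there. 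The more substantial divergence is the continuity case: the paper does \emph{not} route through Lemma~\ref{lem:cpc} and Corollary~\ref{cor:wb}; it only observes that every $s \ll \epsilon$ in $S$ is idempotent and is way-below $\epsilon$ in $\mathit{\Sigma}$ \emph{directly from the mirror property} (any directed $\mathit{\Delta} \subset \mathit{\Sigma}$ whose supremum in $\mathit{\Sigma}$ dominates $\epsilon$ has the same supremum in $S$, so $s \leqslant \delta$ for some $\delta \in \mathit{\Delta}$), giving the inclusion of the $S$-way-below set of $\epsilon$ into its $\mathit{\Sigma}$-way-below set, which suffices. Your route --- securing separate Scott-continuity via Lemma~\ref{lem:cpc} so as to apply Corollary~\ref{cor:wb} in both directions --- buys the stronger conclusion that the two way-below sets coincide exactly, so you never need the standard fact that a single directed set of way-below elements with supremum $\epsilon$ already forces continuity at $\epsilon$; the cost is pulling in the meet-continuity machinery of Proposition~\ref{prop:meetcont}, which the paper's continuity case avoids entirely. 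Your algebraic case is essentially the paper's, the only cosmetic difference being that you note $k$ itself lies in $\mathit{\Sigma}$ and is compact there, where the paper passes to $\kappa = k^* k$ (the same element, since $k \leqslant \delta$ forces $k$ to be idempotent).
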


\begin{proof}
Assume that $S$ is directed-complete. Since $\mathit{\Sigma}$ is Scott-closed in $S$, this is a sub-dcpo of $S$ by \cite[Exercise~II-1.26(ii)]{Gierz03}. 

Assume that $S$ is a continuous poset, and let $\epsilon \in \mathit{\Sigma}$. Every element way-below $\epsilon$ in $S$ belongs to $\mathit{\Sigma}$, and is way-below $\epsilon$ in $\mathit{\Sigma}$ (this merely results from the fact that $S$ is mirror). Thus, $\epsilon$ is the supremum (in $\mathit{\Sigma}$) of a directed subset of elements way-below it, which gives continuity of $\mathit{\Sigma}$. 

Assume that $S$ is algebraic. To prove that $\mathit{\Sigma}$ is algebraic, we need to show that, whenever $\epsilon \prec\!\!\!\prec \delta$, there is some $\kappa \in \mathit{\Sigma}$ with $\kappa \prec\!\!\!\prec \kappa$ and $\epsilon \leqslant \kappa \leqslant \delta$. But $S$ is separately Scott-continuous by Lemma~\ref{lem:cpc}, so by Corollary~\ref{cor:wb} $\epsilon \prec\!\!\!\prec \delta$ implies $\epsilon \ll \delta$. Since $S$ is algebraic, there is some compact element $k \in S$ such that $\epsilon \leqslant k \leqslant \delta$. With Lemma \ref{lem:wb}, we see that $\kappa = k^* k$ is a compact element in $\mathit{\Sigma}$, and $\epsilon \leqslant \kappa \leqslant \delta$. 
\end{proof}

Here comes the most important of our mirror properties. 

\begin{theorem}\label{thm:continuous}
Assume that $S$ is a mirror semigroup. Then $S$ is continuous (resp.\ algebraic) if and only if $\mathit{\Sigma}$ is continuous (resp.\ algebraic). 
\end{theorem}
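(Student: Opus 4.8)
The plan is to notice that one half of the equivalence is already established and to spend all the effort on the converse. The ``only if'' direction---continuity (resp.\ algebraicity) of $S$ forcing the same property for $\mathit{\Sigma}$---is precisely Lemma~\ref{lem:cont}, so nothing remains to be done there. For the ``if'' direction I would assume $\mathit{\Sigma}$ continuous (resp.\ algebraic) and reconstruct the approximation structure of $S$ out of that of $\mathit{\Sigma}$ via the source map $\sigma$, the bridge between the two being Lemma~\ref{lem:wb}.

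The first step is to earn the right to invoke Lemma~\ref{lem:wb}, which presupposes separate Scott-continuity. Here I would exploit that a semilattice is trivially a mirror semigroup, so that $\mathit{\Sigma}$, regarded as an inverse semigroup in its own right, satisfies the hypothesis of Lemma~\ref{lem:cpc}: a continuous $\mathit{\Sigma}$ is separately Scott-continuous, that is, meet-continuous. Proposition~\ref{prop:meetcont} then promotes this to separate Scott-continuity of $S$ itself, which is exactly the standing assumption required to apply Lemma~\ref{lem:wb} and Corollary~\ref{cor:wb}.

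With these tools in place, the heart of the continuity argument is to manufacture approximants to an arbitrary $s\in S$ from approximants to $\sigma(s)$ in $\mathit{\Sigma}$. For each idempotent $\epsilon$ with $\epsilon\prec\!\!\!\prec\sigma(s)$ I would form $s\epsilon$. The decisive computation is $\sigma(s\epsilon)=\epsilon s^*s\epsilon=\epsilon$, using $\epsilon\leqslant\sigma(s)$; since also $s\epsilon\leqslant s$, Lemma~\ref{lem:wb} yields $s\epsilon\ll s$. As $\mathit{\Sigma}$ is continuous, the set $\mathit{\Delta}_s=\{\epsilon:\epsilon\prec\!\!\!\prec\sigma(s)\}$ is directed with $\bigvee\mathit{\Delta}_s=\sigma(s)$, and the left-multiplication form of separate Scott-continuity gives $\bigvee\{s\epsilon:\epsilon\in\mathit{\Delta}_s\}=s\,\sigma(s)=s$. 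Because every element way-below $s$ lies below $s$, this already forces $\bigvee\{t:t\ll s\}=s$. Directedness of $\{t:t\ll s\}$ I would obtain by the same device: given $t_1,t_2\ll s$, their sources lie in $\mathit{\Delta}_s$ by Lemma~\ref{lem:wb}, hence are dominated by some $\epsilon\in\mathit{\Delta}_s$, and then $t_i=s\,\sigma(t_i)=(s\epsilon)\,\sigma(t_i)\leqslant s\epsilon\ll s$, so $s\epsilon$ is a common upper bound sitting inside the way-below set. This establishes that $S$ is continuous.

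For algebraicity I would append a compact-lifting step, using the equivalent characterization recalled just before the theorem. Assuming $\mathit{\Sigma}$ algebraic, continuity of $S$ follows from the previous paragraph; and given $t\ll s$, Lemma~\ref{lem:wb} furnishes $\sigma(t)\prec\!\!\!\prec\sigma(s)$, so algebraicity of $\mathit{\Sigma}$ provides a compact $\kappa$ with $\sigma(t)\leqslant\kappa\leqslant\sigma(s)$. Setting $k=s\kappa$ gives $\sigma(k)=\kappa$, whence $k\ll k$ by Lemma~\ref{lem:wb}, while $t=s\,\sigma(t)=(s\kappa)\,\sigma(t)\leqslant k\leqslant s$; thus $k$ is the required compact element between $t$ and $s$. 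The main obstacle I anticipate is not conceptual but the careful idempotent bookkeeping: repeatedly translating between the order $\leqslant$ and $\sigma$ through Lemma~\ref{lem:leq}, and justifying the identities $\sigma(s\epsilon)=\epsilon$ and $t=(s\epsilon)\,\sigma(t)$ from $\epsilon\leqslant\sigma(s)$ and $\sigma(t)\leqslant\epsilon$ respectively.
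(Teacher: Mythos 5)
Your proposal is correct and follows essentially the same route as the paper: both reduce the ``only if'' direction to Lemma~\ref{lem:cont}, obtain separate Scott-continuity of $S$ by applying Lemma~\ref{lem:cpc} to $\mathit{\Sigma}$ (a mirror semigroup in its own right) and Proposition~\ref{prop:meetcont}, build approximants $s\epsilon \ll s$ from $\epsilon \prec\!\!\!\prec \sigma(s)$ via Lemma~\ref{lem:wb}, and handle algebraicity with the compact element $k = s\kappa$. Your only addition is an explicit verification that the full way-below set of $s$ is directed, a detail the paper leaves implicit; the argument is sound.
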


\begin{proof}
Assume that $\mathit{\Sigma}$ is a continuous poset. 
Then $\mathit{\Sigma}$ is meet-continuous by Lemma~\ref{lem:cpc}, so $S$ is separately Scott-continuous by Proposition~\ref{prop:meetcont}. 
Let us show that $S$ is continuous. Let $s \in S$. There exists some directed subset $\mathit{\Delta}$ of $\mathit{\Sigma}$ such that 
\begin{equation}\label{eq:delta}
\bigvee \mathit{\Delta} = s^* s
\end{equation} 
and $\delta \prec\!\!\!\prec s^* s$ for all $\delta \in \mathit{\Delta}$. Let $\delta \in \mathit{\Delta}$. Then $s \delta \leqslant s$ on the one hand, and $\delta s^* s \leqslant \delta \prec\!\!\!\prec s^* s$, so $\sigma(s \delta) \prec\!\!\!\prec \sigma(s)$, on the other hand. By Lemma~\ref{lem:wb}, we have $s \delta \ll s$, for all $\delta \in \mathit{\Delta}$. Also, $S$ is separately Scott-continuous, so from Equation~(\ref{eq:delta}) we deduce that $s$ is the supremum of $s \mathit{\Delta}$, 
and this set is directed and consists of elements way-below $s$. 
This establishes the continuity of $S$. 

Assume that $\mathit{\Sigma}$ is algebraic. Let $t \ll s$ and let us show that $t \leqslant k \leqslant s$ for some compact element $k \in S$. 
By Lemma~\ref{lem:wb} we have $\sigma(t) \prec\!\!\!\prec \sigma(s)$, so there is some compact element $\kappa$ in $\mathit{\Sigma}$ such that $\sigma(t) \leqslant \kappa \leqslant \sigma(s)$. We get $s \sigma(t) \leqslant s \kappa \leqslant s$, and since $t \leqslant s$ we have $t \leqslant s \kappa \leqslant s$. The element $k = s \kappa$ satifies $k^*k = \kappa$, so that $k$ is compact in $S$ by Lemma~\ref{lem:wb}. This proves that $S$ is algebraic. 
\end{proof}

A continuous inverse semigroup with a multiplicative way-below relation is called a \textit{stably continuous inverse semigroup}. 

\begin{corollary}
Assume that $S$ is a mirror semigroup. Then $S$ is stably continuous if and only if $\mathit{\Sigma}$ is stably continuous. 
\end{corollary}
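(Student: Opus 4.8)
The plan is to combine Theorem~\ref{thm:continuous} (continuity is a mirror property) with Proposition~\ref{prop:mult} (multiplicativity of the way-below relation is a mirror property), since ``stably continuous'' is by definition the conjunction of these two conditions. First I would unfold the definition: $S$ is stably continuous means that $S$ is continuous \emph{and} that $\ll$ is multiplicative on $S$; similarly $\mathit{\Sigma}$ is stably continuous means $\mathit{\Sigma}$ is continuous \emph{and} $\prec\!\!\!\prec$ is multiplicative on $\mathit{\Sigma}$. So the goal is to show that the pair of conditions on $S$ holds if and only if the pair of conditions on $\mathit{\Sigma}$ holds.

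The forward direction is essentially immediate. If $S$ is stably continuous, then $S$ is continuous, so by Theorem~\ref{thm:continuous} $\mathit{\Sigma}$ is continuous; and $\ll$ is multiplicative on $S$, so by Proposition~\ref{prop:mult} $\prec\!\!\!\prec$ is multiplicative on $\mathit{\Sigma}$. Hence $\mathit{\Sigma}$ is stably continuous. The converse runs the same way: assuming $\mathit{\Sigma}$ is continuous, Theorem~\ref{thm:continuous} gives continuity of $S$, and assuming $\prec\!\!\!\prec$ is multiplicative, Proposition~\ref{prop:mult} gives multiplicativity of $\ll$.

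The one point that requires care, and which I expect to be the main (though minor) obstacle, is the hypothesis under which Proposition~\ref{prop:mult} is stated: it presupposes that $S$ is a \emph{separately Scott-continuous} mirror semigroup. I therefore need to verify that separate Scott-continuity is available in both directions before invoking Proposition~\ref{prop:mult}. This is handled by Lemma~\ref{lem:cpc}: a continuous mirror semigroup is automatically separately Scott-continuous. So in the forward direction, continuity of $S$ supplies the hypothesis; in the converse direction, once Theorem~\ref{thm:continuous} has delivered continuity of $S$ from continuity of $\mathit{\Sigma}$, Lemma~\ref{lem:cpc} again supplies separate Scott-continuity, legitimizing the use of Proposition~\ref{prop:mult}. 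Thus the argument is a short bookkeeping of which prerequisite feeds which implication, with no genuine new computation required.

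In writing the proof I would simply chain these citations cleanly, making explicit at each step that the separate Scott-continuity hypothesis of Proposition~\ref{prop:mult} is met via Lemma~\ref{lem:cpc}, so that the corollary reads as a transparent consequence of the two preceding mirror theorems rather than obscuring the dependency on continuity.
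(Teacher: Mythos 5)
Your proof is correct and is essentially the paper's own (implicit) argument: the corollary is stated there without proof precisely because it follows by combining Theorem~\ref{thm:continuous} with Proposition~\ref{prop:mult}. Your care in noting that the separate Scott-continuity hypothesis of Proposition~\ref{prop:mult} is supplied by Lemma~\ref{lem:cpc} (in the converse direction only after Theorem~\ref{thm:continuous} has delivered continuity of $S$) is exactly the right bookkeeping.
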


If $\epsilon \in \mathit{\Sigma}$, we write $H_\epsilon$ for the subset $\{ s \in S : s^* s = \epsilon \}$. (If $S$ is a \textit{Clifford inverse semigroup}, i.e.\ an inverse semigroup such that $s^* s = s s^*$ for all $s \in S$, then $H_\epsilon$ is the maximal subgroup of $S$ with identity element $\epsilon$.)  

\begin{theorem}\label{thm:cont2}
Assume that $S$ is an inverse semigroup such that $\mathit{\Sigma}$ is a continuous poset. Then $S$ is a mirror semigroup if and only if, for all $\epsilon \in \mathit{\Sigma}$, and each pair of distinct points $s, t \in H_\epsilon$, there exists some $\varphi \in \mathit{\Sigma}$, $\varphi \prec\!\!\!\prec \epsilon$, such that $s \varphi \neq t \varphi$. In this case, $S$ is a continuous poset. 
\end{theorem}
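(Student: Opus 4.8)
The plan is to establish the two implications of the stated equivalence separately, and then to deduce the final assertion directly from Theorem~\ref{thm:continuous}. Throughout I will exploit that continuity of $\mathit{\Sigma}$ guarantees, for each $\epsilon \in \mathit{\Sigma}$, that $\mathit{\Delta}_\epsilon := \{\varphi \in \mathit{\Sigma} : \varphi \prec\!\!\!\prec \epsilon\}$ is directed with supremum $\epsilon$ in $\mathit{\Sigma}$. I will also use freely the identity $s\,\sigma(s) = s s^* s = s$ and the fact, from Lemma~\ref{lem:leq}, that for idempotent $\varphi$ the relation $\varphi \leqslant u$ is equivalent to $\varphi = u\varphi$.

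For the easy direction, assume $S$ is a mirror semigroup and fix $\epsilon \in \mathit{\Sigma}$ together with distinct $s, t \in H_\epsilon$. Being mirror, $S$ realizes $\bigvee \mathit{\Delta}_\epsilon = \epsilon$. Since each $\varphi \in \mathit{\Delta}_\epsilon$ satisfies $\varphi\varphi^* = \varphi \leqslant \epsilon = s^* s$, Lemma~\ref{lem:distr} yields $\bigvee(s\,\mathit{\Delta}_\epsilon) = s\epsilon = s$ and similarly $\bigvee(t\,\mathit{\Delta}_\epsilon) = t$. Were $s\varphi = t\varphi$ to hold for every $\varphi \prec\!\!\!\prec \epsilon$, the sets $s\,\mathit{\Delta}_\epsilon$ and $t\,\mathit{\Delta}_\epsilon$ would coincide and force $s = t$; hence some $\varphi \prec\!\!\!\prec \epsilon$ separates $s$ and $t$.

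The substantial direction is the converse. Assuming the separation condition, I let $\mathit{\Delta} \subset \mathit{\Sigma}$ be directed with $\delta := \bigvee \mathit{\Delta}$ computed in $\mathit{\Sigma}$, and I aim to show that $\delta$ is also the supremum of $\mathit{\Delta}$ in $S$. Since $\delta$ is already an upper bound in $S$, it suffices to prove $\delta \leqslant u$ for an arbitrary upper bound $u$ of $\mathit{\Delta}$ in $S$. The first step is to funnel the comparison into a single fiber: from $\alpha \leqslant u$ I get $\alpha = \sigma(\alpha) \leqslant \sigma(u)$ for all $\alpha \in \mathit{\Delta}$, hence $\delta \leqslant u^* u$, and a short computation gives $\sigma(u\delta) = \delta u^* u \delta = \delta$. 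Thus both $\delta$ and $v := u\delta$ lie in $H_\delta$. The plan is then to show $\delta = v$, since $\delta = u\delta$ immediately gives $\delta \leqslant u$.

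To collapse $\delta$ and $v$ I will verify $\delta\varphi = v\varphi$ for every $\varphi \prec\!\!\!\prec \delta$ and apply the separation condition contrapositively. The crucial observation is that $\varphi \prec\!\!\!\prec \delta = \bigvee \mathit{\Delta}$ with $\mathit{\Delta}$ directed forces $\varphi \leqslant \alpha$ for some $\alpha \in \mathit{\Delta}$, whence $\varphi \leqslant u$ and $u\varphi = \varphi$; consequently $v\varphi = u\delta\varphi = u\varphi = \varphi = \delta\varphi$. As $\delta, v \in H_\delta$ cannot be separated by any $\varphi \prec\!\!\!\prec \delta$, the hypothesis forces $\delta = v$, so $S$ is mirror. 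The final claim then follows at once from Theorem~\ref{thm:continuous}. I expect the only genuine difficulty to be the middle step of the converse---spotting that $u\delta$ is the correct element of $H_\delta$ to pit against $\delta$---after which the separation condition does the remaining work mechanically.
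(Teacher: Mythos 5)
Your proof is correct and takes essentially the same approach as the paper's: the forward direction applies Lemma~\ref{lem:distr} to the directed set $\{\varphi : \varphi \prec\!\!\!\prec \epsilon\}$ exactly as the paper does, and the converse pits $u\delta$ against $\delta$ inside $H_\delta$ and invokes the separation hypothesis (you phrase this contrapositively where the paper argues by contradiction, an immaterial difference), with the final continuity claim deduced from Theorem~\ref{thm:continuous} in both cases.
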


\begin{proof}
Assume that $S$ is a mirror semigroup, and let $\epsilon \in \mathit{\Sigma}$ and $s, t \in H_\epsilon$. Suppose that, for all $\varphi \prec\!\!\!\prec \epsilon$, we have $s \varphi = t \varphi$. Since $\mathit{\Sigma}$ is continuous, $A := \{ \varphi \in \mathit{\Sigma} : \varphi \prec\!\!\!\prec \epsilon \}$ is a directed subset of $\mathit{\Sigma}$ that admits $\epsilon$ as supremum in $\mathit{\Sigma}$. Hence, $\epsilon$ is also the supremum of $A$ in $S$, for $S$ is mirror. Moreover, we have $\varphi \varphi^* = \varphi \leqslant \epsilon = s^* s$, for all $\varphi \in A$, so we can apply Lemma~\ref{lem:distr}, which gives 
\begin{align*}
s \epsilon &= s (\bigvee A) = \bigvee (s A) = \bigvee_{\varphi \in A} (s \varphi) \\
&= \bigvee_{\varphi \in A} (t \varphi) = t (\bigvee A) = t \epsilon. 
\end{align*}
Since $s, t \in H_\epsilon$, we get $s = s s^* s = s \epsilon = t \epsilon = t t^* t = t$. 

Conversely, assume that the property given in the theorem is satisfied. We want to show that $S$ is a mirror semigroup, so let $\mathit{\Delta}$ be a directed subset of $\mathit{\Sigma}$ with a supremum $\epsilon$ in $\mathit{\Sigma}$. We want to prove that $\epsilon$ is also the supremum of $\mathit{\Delta}$ in $S$, so let $u \in S$ be an upper bound of $\mathit{\Delta}$. Then $u^* u$ is an upper bound of $\mathit{\Delta}$ in $\mathit{\Sigma}$, so that $\epsilon \leqslant u^* u$ by definition of $\epsilon$. 
This implies that $(u \epsilon)^* (u  \epsilon) = \epsilon$, so we have both $\epsilon$ and $u \epsilon$ in $H_\epsilon$. 
If we suppose that $\epsilon \neq u \epsilon$, there exists some $\varphi \in \mathit{\Sigma}$, $\varphi \prec\!\!\!\prec \epsilon$ such that $\epsilon \varphi \neq u \epsilon \varphi$, i.e.\ $\varphi \neq u \varphi$. But the fact that $\varphi \prec\!\!\!\prec \epsilon$ and the definition of $\epsilon$ imply that there is some $\epsilon_1 \in \mathit{\Delta}$ such that $\varphi \leqslant \epsilon_1$, so that $\varphi \leqslant u$, i.e.\ $\varphi = u \varphi$, a contradiction. We have thus proved that $\epsilon = u \epsilon$, which rewrites to $\epsilon \leqslant u$. This shows that $\epsilon$ is the least upper bound of $\mathit{\Delta}$ in $S$. 
\end{proof}

\begin{problem}
Find a non-continuous inverse semigroup whose semilattice of idempotents is continuous. (Note that in the inverse semigroup $S = [0, 1] \cup \{\omega\}$ given in Example~\ref{ex:cex}, the element $\omega$ is compact, thus $S$ is continuous.)
\end{problem}

\begin{example}[Example~\ref{ex:bic} continued]
In the bicyclic monoid, the semilattice of idempotents is isomorphic to $(P, \geqslant)$. In the particular cases where $P$ equals $\mathbb N$ or $\mathbb{R}_+$, which both are stably continuous semilattices, the associated bicyclic monoid is stably continuous. 
\end{example}

\begin{example}[Example~\ref{ex:rot} continued]
In the rotation semigroup, the semilattice of idempotents $[0,1]$ is stably continuous, so $(B^2, \otimes)$ 
is stably continuous. 
\end{example}

\begin{example}[Example~\ref{ex:cha} continued]
Let $S$ be a finite commutative inverse monoid. 
The cube $[0, 1]^{\mathit{\Sigma}(S)}$, as a finite cartesian product of continuous lattices, is a continuous lattice \cite[Proposition~I-2.1]{Gierz03}. 
Considering the semilattice $\mathit{\Sigma}(S)^{\wedge}$ of characters on $\mathit{\Sigma}(S)$ as a subset of $[0, 1]^{\mathit{\Sigma}(S)}$, it is closed under arbitrary infima and suprema, so it is a continuous lattice \cite[Theorem~I-2.6]{Gierz03}. 
Since $\mathit{\Sigma}(S)^{\wedge}$ and $\mathit{\Sigma}(S^{\wedge})$ are isomorphic, the semilattice $\mathit{\Sigma}(S^{\wedge})$ is also continuous. 
Now the character monoid $S^{\wedge}$ of $S$ is mirror, so $S^{\wedge}$ is continuous. 
\end{example}

The case of the symmetric pseudogroup introduced in Example~\ref{ex:sym} is extended to topological spaces as follows. 
If $X$ is a topological space, the \textit{symmetric pseudogroup} $\mathrsfs{I}(X)$ on $X$ is the set made up of all the partial homeomorphisms on $X$, i.e.\ the homeomorphisms $f : U \rightarrow V$ where $U$ and $V$ are open sets of $X$. The law of inverse semigroup is defined as in the discrete case. 
The symmetric pseudogroup is directed-complete, hence is a mirror semigroup. 

A topological space $X$ is \textit{core-compact} if its collection of closed subsets $(\mathrsfs{F}(X), \supset)$ is a continuous poset. We then have the following characterization. 

\begin{corollary}
Let $X$ be a topological space. Then $X$ is core-compact if and only if its symmetric pseudogroup $\mathrsfs{I}(X)$ is continuous. 
\end{corollary}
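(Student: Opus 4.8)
The plan is to deduce this from the main mirror property, Theorem~\ref{thm:continuous}, after identifying $\mathit{\Sigma}(\mathrsfs{I}(X))$ with the lattice of open sets of $X$ and reconciling this identification with the definition of core-compactness.

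First I would invoke the fact, recorded just above the corollary, that the symmetric pseudogroup $\mathrsfs{I}(X)$ of a topological space is directed-complete, hence is a mirror semigroup. Theorem~\ref{thm:continuous} then applies directly and yields that $\mathrsfs{I}(X)$ is a continuous poset if and only if $\mathit{\Sigma}(\mathrsfs{I}(X))$ is a continuous poset. It therefore suffices to show that continuity of $\mathit{\Sigma}(\mathrsfs{I}(X))$ is exactly the condition defining core-compactness.

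Second, I would describe $\mathit{\Sigma}(\mathrsfs{I}(X))$ explicitly. Exactly as in the discrete case (Example~\ref{ex:sym}), the idempotents of $\mathrsfs{I}(X)$ are the partial identities $\id_U$ with $U$ an open subset of $X$, and the intrinsic order restricts to $\id_U \leqslant \id_V$ if and only if $U \subseteq V$. Hence $\mathit{\Sigma}(\mathrsfs{I}(X))$ is order-isomorphic to the lattice $(\mathcal{O}(X), \subseteq)$ of open subsets of $X$ ordered by inclusion. Third, I would connect with the definition: core-compactness of $X$ is, by definition, the continuity of the poset $(\mathrsfs{F}(X), \supseteq)$ of closed sets under reverse inclusion. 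Complementation $U \mapsto X \setminus U$ is an order isomorphism from $(\mathcal{O}(X), \subseteq)$ onto $(\mathrsfs{F}(X), \supseteq)$; since being a continuous poset is preserved under order isomorphism, $\mathit{\Sigma}(\mathrsfs{I}(X)) \cong (\mathcal{O}(X), \subseteq)$ is continuous if and only if $(\mathrsfs{F}(X), \supseteq)$ is, that is, if and only if $X$ is core-compact. Chaining the three equivalences gives the corollary.

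No genuine obstacle arises once these identifications are set down; the only point requiring a little vigilance is the order-reversal between open and closed sets. Because the continuity of a poset and that of its order-dual are distinct properties in general, I would check carefully that complementation really is an isomorphism (inclusion on opens matching reverse inclusion on closeds) rather than an anti-isomorphism onto the wrongly oriented order, so that the two notions of continuity line up rather than dualize.
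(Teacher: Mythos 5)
Your proposal is correct and follows essentially the same route as the paper: both use the fact that $\mathrsfs{I}(X)$ is directed-complete (hence mirror) to invoke Theorem~\ref{thm:continuous}, and both identify $\mathit{\Sigma}(\mathrsfs{I}(X))$ with $(\mathrsfs{F}(X), \supset)$ via complementation so that continuity of $\mathit{\Sigma}$ is exactly core-compactness. The only cosmetic difference is that the paper verifies the complementation maps are mutually inverse isomorphisms through an adjunction argument, whereas you observe the order isomorphism directly; your care about the orientation of the orders is precisely the point that makes this work.
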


\begin{proof}
Let $\mathit{\Sigma}$ be the semilattice of idempotents of $\mathrsfs{I}(X)$. 
Defining the maps $i : \mathrsfs{F}(X) \rightarrow \mathit{\Sigma}$ and $j : \mathit{\Sigma} \rightarrow \mathrsfs{F}(X)$ respectively by $i(F) = \id_{X\setminus F}$ and $j(f) = X \setminus \dom(f)$, it is easy to show that both $i$ and $j$ are upper adjoints of each other, i.e.\ that $i(F) \leqslant f$ if and only if $F \subset j(f)$, and $f \leqslant i(F)$ if and only if $j(f) \subset F$, for all $F \in \mathrsfs{F}(X)$ and $f \in \mathit{\Sigma}$. 
Thus, $i$ and $j$ are both isomorphisms of complete lattices. 
By \cite[Theorem~I-2.11]{Gierz03} we deduce that $X$ is core-compact if and only if $\mathit{\Sigma}$ is continuous, and, by Theorem~\ref{thm:continuous}, if and only if $\mathrsfs{I}(X)$ is continuous. 
\end{proof}

The topologist may grant more appeal to the following corollary. 

\begin{corollary}
Let $X$ be a Hausdorff topological space. Then $X$ is locally compact if and only if its symmetric pseudogroup $\mathrsfs{I}(X)$ is continuous. 
\end{corollary}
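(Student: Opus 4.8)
The plan is to reduce the statement to the previous corollary (the core-compactness characterization) by showing that for a Hausdorff space, local compactness and core-compactness coincide. Given that corollary, which already establishes that $X$ is core-compact if and only if $\mathrsfs{I}(X)$ is continuous, it suffices to prove the purely topological equivalence: \emph{a Hausdorff space is locally compact if and only if it is core-compact}. So the real content of this proof is a standard fact from domain theory applied to the lattice of open (equivalently closed) sets.

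First I would recall that core-compactness is defined via the continuity of the poset $(\mathrsfs{F}(X), \supset)$ of closed sets, which is order-isomorphic (by complementation) to the lattice $(\mathcal{O}(X), \subseteq)$ of open sets. Continuity of $\mathcal{O}(X)$ means that every open set $U$ is the directed supremum (union) of the open sets way-below it, where $V \ll U$ in $\mathcal{O}(X)$ means that whenever $U \subseteq \bigcup_{i} W_i$ for a directed family of opens, already $V \subseteq W_i$ for some $i$. The key interpretation, which I would state explicitly, is that $V \ll U$ holds precisely when $V$ is ``relatively compact inside $U$'' in the point-free sense. For locally compact Hausdorff spaces one has the concrete fact that $V \ll U$ iff there is a compact set $K$ with $V \subseteq K \subseteq U$; this links the order-theoretic way-below relation to genuine compactness.

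The two directions then go as follows. If $X$ is locally compact Hausdorff, each point of an open set $U$ has a compact neighborhood contained in $U$ (using Hausdorffness to get the neighborhood inside $U$), and the interiors of such compact neighborhoods are way-below $U$ and cover $U$; taking finite unions makes the family directed, so $U$ is the directed union of opens way-below it, giving core-compactness. Conversely, if $X$ is core-compact Hausdorff, then for each point $x$ and each open $U \ni x$ one finds $V \ll U$ with $x \in V$; the way-below relation together with Hausdorffness yields a compact set $K$ with $V \subseteq K \subseteq U$, so $x$ has a compact neighborhood inside $U$, i.e.\ $X$ is locally compact. I would invoke the relevant statement from Gierz et al.\ rather than reprove it, since the equivalence of local compactness and core-compactness for sober (in particular Hausdorff) spaces is precisely \cite[Theorem~V-5.6 and surrounding results]{Gierz03}.

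The main obstacle is the Hausdorff hypothesis, which is essential only in one direction: core-compactness alone does \emph{not} imply local compactness for arbitrary spaces (there exist core-compact spaces that are not locally compact), so one genuinely needs the separation axiom to upgrade the point-free ``relative compactness'' expressed by $\ll$ into the existence of an honest compact neighborhood. Concretely, the delicate point is extracting a compact set $K$ with $V \subseteq K \subseteq U$ from the relation $V \ll U$; in a Hausdorff space this is where compact sets are closed and one can patch together a compact witness, whereas without Hausdorffness the way-below relation need not be witnessed by any compact set. Once this topological equivalence is in hand, the corollary follows immediately by chaining it with the core-compactness corollary and hence with Theorem~\ref{thm:continuous}.
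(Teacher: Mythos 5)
Your proposal is correct and takes essentially the same route as the paper: reduce to the preceding corollary on core-compactness and invoke the known equivalence of local compactness and core-compactness for Hausdorff spaces (the paper cites Hofmann and Mislove for this fact rather than the Gierz et al.\ reference, and does not reprove it, but the mathematical content is identical).
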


\begin{proof}
Given that the space $X$ is Hausdorff, it is known since Hofmann and Mislove \cite{Hofmann81} that local compactness and core-compactness are equivalent properties. 
\end{proof}

\begin{corollary}
Let $X$ be a Hausdorff topological space. Then $X$ is totally disconnected and locally compact if and only if its symmetric pseudogroup $\mathrsfs{I}(X)$ is algebraic. 
\end{corollary}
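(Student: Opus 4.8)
The plan is to run the same machinery as in the two preceding corollaries, now for algebraicity rather than continuity. Since $\mathrsfs{I}(X)$ is directed-complete it is a mirror semigroup, so Theorem~\ref{thm:continuous} reduces the claim to showing that the semilattice of idempotents $\mathit{\Sigma}$ of $\mathrsfs{I}(X)$ is algebraic if and only if $X$ is locally compact and totally disconnected. The adjunction $(i,j)$ built in the proof of the core-compactness corollary identifies $\mathit{\Sigma}$, as a complete lattice, with $(\mathrsfs{F}(X),\supset)$, hence with the lattice $\mathcal{O}(X)$ of open subsets of $X$ ordered by inclusion (the idempotent $\id_U$ corresponding to the open set $U=\dom(\id_U)$). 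So everything comes down to characterizing, among Hausdorff spaces, those for which $\mathcal{O}(X)$ is an algebraic lattice.

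First I would pin down the compact elements of $\mathcal{O}(X)$. An open set $K$ is way-below itself precisely when every directed family of open sets whose union contains $K$ already has a single member containing $K$; passing to finite unions, this is exactly the statement that $K$ is a compact subset of $X$. Thus the compact elements of $\mathcal{O}(X)$ are the compact open sets, and since a finite union of compact open sets is again compact open, the compact open subsets of any given open set form a directed family. Consequently $\mathcal{O}(X)$ is algebraic if and only if, for every open $U$, one has $U=\bigcup\{K : K\subseteq U,\ K \text{ compact open}\}$, i.e.\ if and only if the compact open sets form a basis for the topology of $X$.

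It then remains to establish the purely topological equivalence: a Hausdorff space $X$ has a basis of compact open sets if and only if it is locally compact and totally disconnected. The forward implication is easy: a basis of compact open sets makes every point have a compact neighbourhood, and since compact subsets of a Hausdorff space are closed, the basic sets are clopen, so points are separated by clopen sets and $X$ is totally disconnected. The reverse implication is where the real work lies, and is the main obstacle. Given $x$ in an open set $U$, local compactness provides an open $V$ with $x\in V\subseteq\overline{V}\subseteq U$ and $\overline{V}$ compact; the subspace $\overline{V}$ is compact Hausdorff and, being a subspace of a totally disconnected space, is itself totally disconnected. The key classical fact I would invoke here is that a compact Hausdorff totally disconnected space is zero-dimensional, that is, admits a basis of clopen sets. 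Applying this inside $\overline{V}$ yields a set $W$, clopen in $\overline{V}$ and contained in $V$, with $x\in W$; such a $W$ is compact (closed in the compact $\overline{V}$) and open in $X$ (being open in $\overline{V}$ and contained in the open set $V$), so $x\in W\subseteq U$ exhibits the desired compact open neighbourhood. Chaining these equivalences back through Theorem~\ref{thm:continuous} then delivers the corollary.
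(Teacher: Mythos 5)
Your proof is correct, but it takes a more self-contained route than the paper. The paper's proof is a two-line citation: it invokes \cite[Exercise~I-4.28(iv)]{Gierz03}, which asserts outright that the lattice of closed subsets of a Hausdorff space is algebraic if and only if the space is totally disconnected and locally compact, and then applies Theorem~\ref{thm:continuous} (the identification of $\mathit{\Sigma}$ with $(\mathrsfs{F}(X), \supset)$ and the mirror property being already available from the core-compactness corollary). You instead unpack and prove that cited equivalence: you characterize the compact elements of the open-set lattice as the compact open sets, observe that algebraicity of the lattice amounts to the compact open sets forming a basis, and then establish the purely topological equivalence of that property with local compactness plus total disconnectedness for Hausdorff spaces --- the only outsourced ingredient being the classical fact that a compact Hausdorff totally disconnected space is zero-dimensional, which is a standard result (components coincide with quasi-components in compact Hausdorff spaces). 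What the paper's approach buys is brevity and a clean dependence on a standard reference; what yours buys is transparency --- it makes visible \emph{why} the equivalence holds (algebraic $=$ basis of compact opens, i.e.\ the spectral-space-like condition), at the cost of a longer argument and one classical topological theorem invoked without proof. Both arguments share the same skeleton (mirror property, Theorem~\ref{thm:continuous}, and the lattice isomorphism $\mathit{\Sigma} \cong (\mathrsfs{F}(X), \supset)$), so the difference is precisely in how the lattice-theoretic/topological equivalence is discharged.
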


\begin{proof}
See \cite[Exercise~I-4.28(iv)]{Gierz03}, where it is asserted that the lattice of closed subsets of a Hausdorff space $X$ is algebraic if and only if $X$ is totally disconnected and locally compact, then apply Theorem~\ref{thm:continuous}. 
\end{proof}

\section{Conclusion and perspectives}

The work presented in this paper is a first step in the study of inverse semigroups from a domain theoretical perspective. In future work we shall aim at topological considerations, using Scott's and Lawson's topologies. We shall also examine in more detail (compact) topological inverse semigroups. 

\begin{acknowledgements}
I would like to thank three anonymous referees, notably one of them who suggested Example~\ref{ex:cex} and the result asserted by Theorem~\ref{thm:cont2}. 
\end{acknowledgements}

\bibliographystyle{plain}

\begin{thebibliography}{10}

\bibitem{Castella10}
Dominique Castella.
\newblock \'{E}l\'ements d'alg\`ebre lin\'eaire tropicale.
\newblock {\em Linear Algebra Appl.}, 432(6):1460--1474, 2010.

\bibitem{Connes10}
Alain Connes.
\newblock The {W}itt construction in characteristic one and quantization.
\newblock In {\em Noncommutative geometry and global analysis}, volume 546 of
  {\em Contemp. Math.}, pages 83--113. Amer. Math. Soc., Providence, RI, 2011.

\bibitem{Connes12}
Alain Connes and Caterina Consani.
\newblock Characteristic $1$, entropy and the absolute point.
\newblock In {\em Noncommutative Geometry, Arithmetic, and Related Topics,
  {P}roceedings of the 21st {Meeting} of the {J}apan-{U}.{S}. {M}athematics
  {I}nstitute held in Baltimore, March 23--25, 2009}, pages 75--140. Johns
  Hopkins University Press, 2012.
\newblock Edited by Caterina Consani and Alain Connes.

\bibitem{Domanov71}
O.~I. Domanov.
\newblock Semigroups of all partial automorphisms of universal algebras.
\newblock {\em Izv. Vys\v s. U\v cebn. Zaved. Matematika}, 1971(8
  (111)):52--58.

\bibitem{Fulp71}
Ronald~O. Fulp.
\newblock Character semigroups of locally compact inverse semigroups.
\newblock {\em Proc. Amer. Math. Soc.}, 27:613--618, 1971.

\bibitem{Gierz03}
Gerhard Gierz, Karl~Heinrich Hofmann, Klaus Keimel, Jimmie~D. Lawson,
  Michael~W. Mislove, and Dana~S. Scott.
\newblock {\em Continuous lattices and domains}, volume~93 of {\em Encyclopedia
  of Mathematics and its Applications}.
\newblock Cambridge University Press, Cambridge, 2003.

\bibitem{Hartwig80}
Robert~E. Hartwig.
\newblock How to partially order regular elements.
\newblock {\em Math. Japon.}, 25(1):1--13, 1980.

\bibitem{Hofmann81}
Karl~Heinrich Hofmann and Michael~W. Mislove.
\newblock Local compactness and continuous lattices.
\newblock In {\em Continuous lattices, Proceedings of the Conference on
  Topological and Categorical Aspects of Continuous Lattices (Workshop IV) held
  at the University of Bremen, Bremen, November 9--11, 1979}, volume 871 of
  {\em Lecture Notes in Mathematics}, pages 209--248, Berlin, 1981.
  Springer-Verlag.
\newblock Edited by Bernhard Banaschewski and Rudolf-E. Hoffmann.

\bibitem{Hofmann76}
Karl~Heinrich Hofmann and Albert Stralka.
\newblock The algebraic theory of compact {L}awson semilattices. {A}pplications
  of {G}alois connections to compact semilattices.
\newblock {\em Dissertationes Math. (Rozprawy Mat.)}, 137:58, 1976.

\bibitem{Lawson69}
Jimmie~D. Lawson.
\newblock Topological semilattices with small semilattices.
\newblock {\em J. London Math. Soc. (2)}, 1:719--724, 1969.

\bibitem{Lawson73}
Jimmie~D. Lawson.
\newblock Intrinsic topologies in topological lattices and semilattices.
\newblock {\em Pacific J. Math.}, 44:593--602, 1973.

\bibitem{Lawson04b}
Jimmie~D. Lawson.
\newblock Idempotent analysis and continuous semilattices.
\newblock {\em Theoret. Comput. Sci.}, 316(1-3):75--87, 2004.

\bibitem{Lawson98b}
Mark~V. Lawson.
\newblock {\em Inverse semigroups, the theory of partial symmetries}.
\newblock World Scientific Publishing Co. Inc., River Edge, NJ, 1998.

\bibitem{Lea76}
James~W. Lea, Jr.
\newblock Continuous lattices and compact {L}awson semilattices.
\newblock {\em Semigroup Forum}, 13(4):387--388, 1976/77.

\bibitem{Lescot09}
Paul Lescot.
\newblock Alg\`ebre absolue.
\newblock {\em Ann. Sci. Math. Qu\'ebec}, 33(1):63--82, 2009.

\bibitem{Liber53}
A.~E. Liber.
\newblock On symmetric generalized groups.
\newblock {\em Mat. Sbornik N.S.}, 33(75):531--544, 1953.

\bibitem{McAlister80}
Donald~B. McAlister.
\newblock Embedding inverse semigroups in coset semigroups.
\newblock {\em Semigroup Forum}, 20(3):255--267, 1980.

\bibitem{Mitsch78}
Heinz Mitsch.
\newblock Inverse semigroups and their natural order.
\newblock {\em Bull. Austral. Math. Soc.}, 19(1):59--65, 1978.

\bibitem{Mitsch86}
Heinz Mitsch.
\newblock A natural partial order for semigroups.
\newblock {\em Proc. Amer. Math. Soc.}, 97(3):384--388, 1986.

\bibitem{Mitsch94}
Heinz Mitsch.
\newblock Semigroups and their natural order.
\newblock {\em Math. Slovaca}, 44(4):445--462, 1994.

\bibitem{Nambooripad80}
K.~S.~Subramonian Nambooripad.
\newblock The natural partial order on a regular semigroup.
\newblock {\em Proc. Edinburgh Math. Soc. (2)}, 23(3):249--260, 1980.

\bibitem{Petrich84}
Mario Petrich.
\newblock {\em Inverse semigroups}.
\newblock Pure and Applied Mathematics (New York). John Wiley \& Sons Inc., New
  York, 1984.
\newblock A Wiley-Interscience Publication.

\bibitem{Poncet11}
Paul Poncet.
\newblock {\em Infinite-dimensional idempotent analysis: the role of continuous
  posets}.
\newblock PhD thesis, \'Ecole Polytechnique, Palaiseau, France, 2011.

\bibitem{Preston54a}
Gordon~B. Preston.
\newblock Inverse semi-groups.
\newblock {\em J. London Math. Soc.}, 29:396--403, 1954.

\bibitem{Resende06}
Pedro Resende.
\newblock A note on infinitely distributive inverse semigroups.
\newblock {\em Semigroup Forum}, 73(1):156--158, 2006.

\bibitem{Rinow63}
Willi Rinow.
\newblock \"{U}ber die {V}ervollst\"andigung induktiver {G}ruppoide.
\newblock {\em Math. Nachr}, 25:199--222, 1963.

\bibitem{Scott72}
Dana~S. Scott.
\newblock Continuous lattices.
\newblock In {\em Toposes, algebraic geometry and logic (Conf., Dalhousie
  Univ., Halifax, N. S., 1971)}, pages 97--136. Lecture Notes in Math., Vol.
  274, Berlin, 1972. Springer.

\bibitem{Tits57}
Jacques Tits.
\newblock Sur les analogues alg\'ebriques des groupes semi-simples complexes.
\newblock In {\em Colloque d'alg\`ebre sup\'erieure, tenu \`a {B}ruxelles du 19
  au 22 d\'ecembre 1956}, Centre Belge de Recherches Math\'ematiques, pages
  261--289. \'Etablissements Ceuterick, Louvain, 1957.

\bibitem{Vagner52}
Viktor~V. Wagner.
\newblock Generalized groups.
\newblock {\em Doklady Akad. Nauk SSSR (N.S.)}, 84:1119--1122, 1952.

\bibitem{Warne61}
Ronson~J. Warne and L.~K. Williams.
\newblock Characters on inverse semigroups.
\newblock {\em Czechoslovak Math. J.}, 11 (86):150--155, 1961.

\end{thebibliography}

\def\cprime{$'$} \def\cprime{$'$} \def\cprime{$'$} \def\cprime{$'$}
  \def\ocirc#1{\ifmmode\setbox0=\hbox{$#1$}\dimen0=\ht0 \advance\dimen0
  by1pt\rlap{\hbox to\wd0{\hss\raise\dimen0
  \hbox{\hskip.2em$\scriptscriptstyle\circ$}\hss}}#1\else {\accent"17 #1}\fi}
  \def\ocirc#1{\ifmmode\setbox0=\hbox{$#1$}\dimen0=\ht0 \advance\dimen0
  by1pt\rlap{\hbox to\wd0{\hss\raise\dimen0
  \hbox{\hskip.2em$\scriptscriptstyle\circ$}\hss}}#1\else {\accent"17 #1}\fi}

\end{document}